\definecolor{refkey}{gray}{.45}
\definecolor{labelkey}{gray}{.45}
\numberwithin{equation}{section}
\newcommand{\rn}{\mathbb R^n}
\newcommand{\sn}{S^{n-1}}
\newcommand{\blb}{\raise.3ex\hbox{$\scriptstyle \pmb \lbrack$}}
\newcommand{\sblb}{\raise.1ex\hbox{$\scriptscriptstyle \pmb \lbrack$}}
\newcommand{\brb}{\raise.3ex\hbox{$\scriptstyle \pmb \rbrack$}}
\newcommand{\sbrb}{\raise.1ex\hbox{$\scriptscriptstyle \pmb \rbrack$}}
\newcommand{\bla}{\raise.2ex\hbox{$\scriptstyle\pmb \langle$}}
\newcommand{\sbla}{\raise.1ex\hbox{$\scriptscriptstyle\pmb \langle$}}
\newcommand{\bra}{\raise.2ex\hbox{$\scriptstyle\pmb \rangle$}}
\newcommand{\sbra}{\raise.1ex\hbox{$\scriptscriptstyle\pmb \rangle$}}
\newcommand{\blrb}{\raise.3ex\hbox{$\scriptstyle \pmb | $}}
\newcommand{\sblrb}{\raise.1ex\hbox{$\scriptscriptstyle \pmb | $}}
\newcommand{\wt}{\widetilde}
\newcommand{\psum}{\,{+_{\negthinspace\kern-2pt p}}\,}
\newcommand{\qsum}[1]{\,{+_{\negthinspace\kern-2pt \lower -2pt \hbox{$_{_{#1}}$}}}\,}
\newcommand{\osum}{{+_{\negthinspace\kern-2pt {\rm{o}}}}\,}
\newcommand{\dpsum}{\,{\tilde+_{\negthinspace\kern-1pt p}}\,}
\newcommand{\dqsum}[1]{{\,\wt+_{\negthinspace\kern-1pt #1}}\,}
\newcommand{\lsub}[1]{\hskip -1.5pt\lower.5ex\hbox{$_{#1}$}}
\newcommand{\R}{\mathbb{R}}
\newcommand{\LeftEqNo}{\let\veqno\@@leqno}
\DeclareMathOperator{\Span}{span}
\newtheorem{lemma}{Lemma}[section]
\newtheorem{theorem}[lemma]{Theorem}
\author[K. B\"or\"oczky]{K\'aroly J. B\"or\"oczky}
\address[K. B\"or\"oczky]{Alfr\'ed R\'enyi Institute of Mathematics
 Hungarian Academy of Sciences and Central European University}
\author[E. Lutwak]{Erwin Lutwak}
\address[E. Lutwak, D. Yang, G. Zhang, Y. Zhao]{Department of Mathematics, New York University Tandon School of Engineering}
\author[D. Yang]{Deane Yang}
\author[G. Zhang]{Gaoyong Zhang}
\author[Y. Zhao]{Yiming Zhao}
\title{The dual Minkowski problem for symmetric convex bodies}
\begin{document}

\maketitle

\begin{abstract}
  The dual Minkowski problem for even data asks what are the necessary and sufficient conditions on an even prescribed measure on the unit sphere for it to be the $q$-th dual curvature measure of an origin-symmetric convex body in $\R^n$. A full solution to this is given when $1 < q < n$. 
  The necessary and sufficient condition is an explicit measure concentration condition. A variational approach is used, where the functional is the sum of a dual quermassintegral and an entropy integral. The proof requires two crucial estimates. The first is an estimate of the entropy integral proved using a spherical partition. The second is a sharp estimate of the dual quermassintegrals for a carefully chosen barrier convex body.
\end{abstract}

\section{Introduction}

Geometric measures and their associated Minkowski problems of convex
bodies in Euclidean space are central to the study of convex geometry.
The classical Minkowski problem is
prescribing the surface area measure (in the smooth case, prescribing the Gauss curvature) of a convex body. The solution to
the classical Minkowski problem
has had many applications in
various fields of analysis and geometry.
See Section 8.2 in Schneider \cite{schneider2014} for an overview.
The Christoffel-Minkowski problem (prescribing area measures)
and the Aleksandrov problem (prescribing curvature measures)
are two other important Minkowski problems in convex geometry
 that are still unsolved.
See for example, Sections 8.4 and 8.5 in \cite{schneider2014}.
These Minkowski problems belong to the classical Brunn-Minkowski theory.

More recently, Lutwak \cite{MR1231704}
introduced
the $L_p$ Brunn-Minkowski theory, where $p=1$ is the classical theory cited above, and posed the  $L_p$ Minkowski problem (prescribing $L_p$ surface area measure) as a fundamental question.
The most important (and therefore most challenging) cases include, when $p=0$, the logarithmic Minkowski
problem (see B\"{o}r\"{o}czky-Lutwak-Yang-Zhang \cite{BLYZ})
 and, when $p = -n$, the centro-affine Minkowski problem (see
 Chou \& Wang \cite{MR2254308} and Zhu \cite{MR3356071}).
 The $L_p$ Minkowski problem when $p>1$ was solved by Lutwak
 \cite{MR1231704} for symmetric convex bodies and by Chou \& Wang \cite{MR2254308}
 for the general case. Alternate proofs were given by Hug-Lutwak-Yang-Zhang \cite{HLYZ05}. The case of $p<1$ is still largely open (see
 B\"{o}r\"{o}czky-Lutwak-Yang-Zhang \cite{BLYZ}, Huang-Liu-Xu \cite{HLX}, Jian-Lu-Wang \cite{JLW},
and Zhu \cite{MR3228445, Zhu-jfa}). For other recent progress on the $L_p$-Minkowski problem, see B\"{o}r\"{o}czky-Trinh \cite{BT} and Chen-Li-Zhu \cite{CLZ1, CLZ2}.
 The $L_p$ Minkowski problem also plays a key role
 in establishing affine Sobolev inequalities (see, for example,
 Lutwak-Yang-Zhang \cite{LYZ1, LYZ2}, Cianchi-Lutwak-Yang-Zhang \cite{CLYZ}, and Haberl \& Schuster \cite{MR2530600}).

 Very recently,
 Huang-Lutwak-Yang-Zhang \cite{HLYZ}
introduced  \emph{dual curvature measures}
$\widetilde{C}_q$, where $q\in \mathbb{R}$,
as the natural duals to Federer's curvature measures. These are fundamental to the dual Brunn-Minkowski theory and analogous to the surface area measures in Brunn-Minkowski theory mentioned above.
This leads naturally to the \emph{dual Minkowski problem} of prescribing
dual curvature measures.
Remarkably, the family of dual Minkowski problems connects the well-known
Aleksandrov problem ($q=0$) to the logarithmic Minkowski problem ($q=n$) mentioned above.
We present here a complete solution to
the dual Minkowski problem  within the class of origin-symmetric convex bodies
for the critical strip $0<q<n$.

The dual Brunn-Minkowski theory was first introduced by Lutwak, based on
a conceptual but mysterious duality\footnote{Although Lutwak's duality is motivated by the duality between intersections and projections in projective geometry, it is a duality of concepts (such as mixed volumes) instead of the usual duality between points and hyperplanes in a vector space.}  in convex geometry (see Schneider \cite{schneider2014},
p. 507, for a lucid explanation). The power of the theory
was demonstrated when
intersection bodies, which are central to the dual Brunn-Minkowski theory,
played a crucial role in the solution to the well-known Busemann-Petty problem. The proof relied on connections between the dual theory and harmonic analysis.
See, for example, Bourgain \cite{Bo91}, Gardner \cite{G94annals},
Gardner-Koldobsky-Schlumprecht \cite{GKS99}, Lutwak \cite{L88Adv},
and Zhang \cite{Z99annals},
and see Gardner \cite{MR2251886} and Koldobsky \cite{K05}
for additional references.

Dual curvature measures, parameterized by $q \in \R$,
are the analogues in the dual Brunn-Minkowski theory to
Aleksandrov's area measures and Federer's curvature measures
in the classical Brunn-Minkowski theory.
The $0$-th dual curvature measure
is equivalent to Aleksandrov's integral curvature for
the polar body. The $n$-th dual curvature measure is
the cone volume measure studied in Barthe,
Gu\'{e}don, Mendelson \& Naor \cite{MR2123199}, B\"{o}r\"{o}czky \& Henk
\cite{MR3415694}, Henk \& Linke \cite{MR3148545}, Ludwig \& Reitzner
\cite{MR2680490}, Stancu \cite{MR1901250,MR2019226}, and Zou \& Xiong
\cite{MR3255458}. Dual curvature measures encode the geometry of a convex body's interior, in contrast to their counterparts in the Brunn-Minkowski theory, which reflect the geometry of the boundary.
They provide a new class of valuations (i.e., finitely additive geometric invariants of convex bodies) that are dual to their counterparts in the Brunn-Minkowski theory. The latter have been studied extensively in recent years. See
Haberl \cite{MR2966660},
Haberl \& Ludwig \cite{MR2250020}, Haberl \& Parapatits \cite{MR3194492,MR3176613},
 Ludwig \cite{MR2159706,MR2772547}, Ludwig \& Reitzner \cite{MR2680490},
 Schuster \cite{MR2435426,MR2668553}, Schuster \& Wannerer \cite{MR2846354},
 Zhao \cite{zhao15imrn}
 and the references therein.
\smallskip

The
dual Minkowski problem
for dual curvature measures proposed in Huang-Lutwak-Yang-Zhang \cite{HLYZ}
states:
\smallskip

\noindent
\textbf{The Dual Minkowski Problem.} \textit{
Given a finite Borel measure $\mu$ on the unit sphere $S^{n-1}$
and a real number $q\in \mathbb{R}$,
find necessary and sufficient conditions on $\mu$ so that there exists a
convex body $K$ in $\rn$ that solves the geometric equation,
\begin{equation}\label{dMPeq}
\widetilde{C}_q(K,\cdot)=\mu,
\end{equation}}
where $\widetilde{C}_q(K,\cdot)$ is the $q$-th dual curvature measure of the convex body
$K$ in $\rn$.

In the special case when the given measure has a density $f$,
the geometric equation \eqref{dMPeq}
reduces to the Monge-Amp\`ere type equation on $S^{n-1}$ given by
\begin{equation}
  \label{pde1.1}
\operatorname{det}(\overline\nabla^2h + hI) = nh^{-1}(|\overline\nabla h|^2 + h^2)^{(n-q)/2}f
\end{equation}
where $f$ is a given non-negative integrable function, $h$ is the unknown function, $I$ is the standard Riemannian metric on $S^{n-1}$. Also, $\overline\nabla h$ and $\overline\nabla^2h$ are the gradient and Hessian of $h$ with respect to $I$, respectively.

Dual Minkowski problems, including the logarithmic Minkowski problem, are more
challenging than previously solved Minkowski problems.
This arises from
 the phenomenon of  measure concentration, which implies
 that there are singular prescribed measures for which
no solutions exist.
 This implies that there is no straightforward way to solve general problem by first solving the smooth case using the partial differential equation
\eqref{pde1.1} and then using an  approximation argument.

When $q=0$, the dual Minkowski problem is the classical Aleksandrov problem
that was solved by Aleksandrov \cite{MR0007625} using a topological argument. See also
Guan \& Li \cite{MR1454174}, Oliker \cite{MR2332603}, and Wang
\cite{MR1357407} for other work on this problem and its variants.
The $L_p$ version of the Aleksandrov problem was introduced and studied
by Huang-Lutwak-Yang-Zhang \cite{HLYZ2}.

When $q = n$, the dual Minkowski problem is the logarithmic Minkowski problem
that was solved by B\"or\"oczky-Lutwak-Yang-Zhang \cite{BLYZ} for symmetric convex bodies and is still
open in the asymmetric case (see, for example, B\"{o}r\"{o}czky, Heged\H{u}s \& Zhu
\cite{Boroczky20062015}, Stancu \cite{MR1901250,MR2019226},
 Zhu \cite{MR3228445}). The logarithmic Minkowski problem is
 closely connected with isotropic measures
 (B{\"o}r{\"o}czky-Lutwak-Yang-Zhang \cite{MR3316972}) and curvature flows
 (Andrews \cite{MR1714339, MR1949167}). It was discovered that a measure concentration condition (described in the next paragraph) is the precise obstruction to the existence of solutions to this singular Monge-Amp\`ere equation.

   A finite Borel measure
   $\mu$ on $S^{n-1}$ is said to satisfy the \emph{subspace concentration
    condition} if
\begin{equation}
\label{scc}
\frac{\mu(\xi\cap S^{n-1})}{\mu(S^{n-1})}\leq \frac{\dim\xi}{n},
\end{equation}
for each proper subspace $\xi\subset \mathbb{R}^n$ and, if
equality holds for a subspace $\xi$,
there exists a subspace $\xi'\subset \mathbb{R}^n$ complementary
to $\xi$ such that $\mu$ is concentrated on $S^{n-1}\cap (\xi\cup \xi')$.
B\"{o}r\"{o}czky-Lutwak-Yang-Zhang \cite{BLYZ} proved that there exists an origin-symmetric
convex body whose cone volume measure is equal to $\mu$ if and only if $\mu$
 is an even finite Borel measure that satisfies the subspace concentration
 condition.

The same phenomenon arose in Huang-Lutwak-Yang-Zhang's attempt \cite{HLYZ} to solve the dual Minkowski problem for symmetric convex bodies .
They defined, for an even finite Borel measure
$\mu$ on $\sn$ and $1<q<n$, the following subspace mass inequalities:
\begin{equation}
\label{smi1}
\frac{\mu(\xi\cap S^{n-1})}{\mu(S^{n-1})}<1- \frac{q-1}q \frac{n-\dim\xi}{n-1},
\end{equation}
for each proper subspace $\xi\subset \mathbb{R}^n$,
and, for $0<q\le 1$,
\begin{equation}
\label{smi1.1}
\frac{\mu(\xi\cap S^{n-1})}{\mu(S^{n-1})}<1,
\end{equation}
for any subspace $\xi$ of co-dimension 1. They proved that, if satisfied,
there exists an
origin-symmetric convex body whose $q$-th dual curvature measure is equal
to $\mu$. That is, the subspace mass inequalities \eqref{smi1} and \eqref{smi1.1}
are sufficient conditions for existence of solutions to
the dual Minkowski problem for symmetric convex bodies.
When $0<q\le 1$,
condition \eqref{smi1.1} is both necessary and sufficient, but when $1<q<n$, condition \eqref{smi1}
 is not a necessary one. In fact, examples of origin-symmetric convex bodies whose dual
 curvature measures violate \eqref{smi1} were presented
  recently in \cite{BH,zhao}.
  A more refined subspace mass inequality, which first appeared in \cite{BH,zhao},
  is the following:
\smallskip

\noindent
{\bf Subspace Mass Inequality.}\
For $0<q<n$, a finite Borel measure $\mu$ on $S^{n-1}$ is said to satisfy
\emph{the $q$-th subspace mass inequality} if
\begin{equation}
\label{smi2}
\frac{\mu(\xi\cap S^{n-1})}{\mu(S^{n-1})} <
\begin{cases}
i/q  &i < q \\
1 &i \ge q
\end{cases}
\end{equation}
for any proper subspace $\xi$ of dimension $i$ in $\rn$.
B\"{o}r\"{o}czky,
Henk \& Pollehn \cite{BH} showed that, when $1<q<n$,
the $q$-th subspace mass inequality is a necessary condition for
the existence of solutions to the
dual Minkowski problem for symmetric convex bodies. That is,
the $q$-th
dual curvature measure of every origin-symmetric convex body
satisfies the $q$-th subspace mass inequality. Zhao \cite{zhao} showed
when $q\in \{2, \ldots, n-1\}$,
the $q$-th subspace mass inequality is also a sufficient condition for the
dual Minkowski problem for symmetric convex bodies. That is,
every even finite Borel
measure satisfying the $q$-th subspace mass inequality is the $q$-th
dual curvature measure of an origin-symmetric convex body.
This provides a complete solution to the dual Minkowski problem for even data and integer $q\in \{2, \ldots, n-1\}$.
\smallskip

The aim of this paper is to give a complete solution to the dual Minkowski problem for even data and any real $q\in (0,n)$.

\begin{theorem}
\label{main theorem}
Let $0<q<n$ and $\mu$ be a non-zero even finite Borel measure on $S^{n-1}$.
Then there exists an origin-symmetric convex body $K$ in $\rn$ such that
$\widetilde{C}_q(K,\cdot)=\mu$ if and only if $\mu$ satisfies
the $q$-th subspace mass inequality \eqref{smi2}.
\end{theorem}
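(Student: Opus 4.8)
The plan is to prove necessity and sufficiency separately, with the bulk of the work in sufficiency. Necessity is essentially known: one must show that the $q$-th dual curvature measure $\widetilde{C}_q(K,\cdot)$ of any origin-symmetric convex body satisfies \eqref{smi2}; this was established by B\"or\"oczky, Henk \& Pollehn \cite{BH} for $1<q<n$, and for $0<q\le 1$ the inequality \eqref{smi2} reduces to \eqref{smi1.1}, which follows from the fact that the radial function of a convex body with interior points is bounded below on an open set, so no hyperplane section can carry the full mass. I would reproduce the short argument: parametrize $\widetilde{C}_q$ by the integral $\frac1n\int_{\omega}\rho_K(u)^q\,du$ over the radial Gauss image, and estimate the contribution of directions whose outer normals lie near a subspace $\xi$ using the bound $\rho_K\lesssim$ distance-to-$\xi^\perp$ type inequalities coming from convexity and symmetry.

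For sufficiency, I would use the variational method indicated in the abstract. Fix an even measure $\mu$ satisfying \eqref{smi2}. Define, for an even function $h$ on $\sn$ (support function of an origin-symmetric convex body $K$), the functional
\[
\Phi(K) \;=\; \widetilde{W}_{n-q}(K) \;-\; \frac{1}{q}\int_{\sn}\log h_K(u)\,d\mu(u),
\]
where $\widetilde{W}_{n-q}$ is the $(n-q)$-th dual quermassintegral (equivalently $\frac1n\int_{\sn}\rho_K(u)^q\,du$ up to normalization), and minimize $\Phi$ over the class of origin-symmetric convex bodies of dual quermassintegral equal to $1$ (or, dually, over convex bodies containing the origin with a normalization on the radial integral). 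A standard first-variation computation — using the variational formula for dual curvature measures from \cite{HLYZ} — shows that a minimizer $K_0$ satisfies $\widetilde{C}_q(K_0,\cdot)=\lambda\mu$ for some $\lambda>0$, and rescaling $K_0$ by $\lambda^{1/q}$ (homogeneity of $\widetilde{C}_q$ is degree $q$ in the radial function) produces the desired body. So everything reduces to: (i) the minimization problem has a solution, i.e. a minimizing sequence has a subsequence converging to an origin-symmetric convex body with nonempty interior; and (ii) the minimizer is nondegenerate, i.e. does not collapse to a lower-dimensional body.

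The heart of the matter — and the main obstacle — is ruling out degeneration of the minimizing sequence, which is exactly where the subspace mass inequality \eqref{smi2} must be used, and where the two "crucial estimates" advertised in the abstract come in. Suppose a minimizing sequence $K_j$ degenerates; after passing to a subsequence one extracts a "limiting" flat direction, and more precisely a nested sequence of subspaces along which the bodies become very thin. The strategy is to derive a contradiction by comparing $\Phi(K_j)$ against $\Phi$ of a carefully chosen \emph{barrier body} — an ellipsoid-like body, thin in the degenerating subspace $\xi$ and long in $\xi^\perp$, whose parameters are tuned to the codimension. For this one needs, first, a sharp lower bound on the entropy integral $-\frac1q\int\log h_{K_j}\,d\mu$ in terms of how concentrated $\mu$ is near the subspaces $\xi\cap\sn$ — this is the estimate proved via a spherical partition, decomposing $\sn$ into zones at dyadically increasing angular distance from $\xi$ and using \eqref{smi2} to control the $\mu$-mass of each zone; and second, a matching sharp upper bound on the dual quermassintegral $\widetilde{W}_{n-q}$ of the barrier body, which is the delicate geometric estimate (the $(n-q)$-th dual quermassintegral is an integral over Grassmannians of volumes of sections, and one must estimate it sharply for the anisotropic barrier). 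The inequality \eqref{smi2} with its dichotomy at $i=q$ is precisely what makes the sum of these two estimates have the right sign, forcing a contradiction unless the sequence stays nondegenerate. Once nondegeneration is in hand, $K_j$ subconverges in the Hausdorff metric to a convex body $K_0$ with $0\in\Int K_0$, lower semicontinuity (or continuity) of $\Phi$ identifies $K_0$ as a minimizer, the Euler–Lagrange equation finishes the existence proof, and combined with the necessity direction this yields the stated "if and only if."
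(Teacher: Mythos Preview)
Your broad outline matches the paper's: variational formulation, Euler--Lagrange identifies the extremizer as a solution up to dilation, and the whole problem reduces to ruling out degeneration of an extremizing sequence via an entropy estimate plus a dual-quermassintegral estimate; necessity is cited from \cite{BH} (and is trivial for $0<q\le1$), exactly as you say.

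Several details, however, are off in ways that matter. First, the functional the paper uses is the scale-invariant combination $\Phi_\mu(K)=E_\mu(K)+\tfrac1q\log\widetilde W_{n-q}(K)$ with $E_\mu(K)=-\tfrac{1}{|\mu|}\int\log h_K\,d\mu$, and it is \emph{maximized}; your constrained formulation has the sign reversed (minimizing your $\Phi$ under $\widetilde W_{n-q}=1$ amounts to maximizing $\int\log h_K$, which is the wrong direction). Second, and more importantly, the barrier is not a competitor in the variational problem: one does not compare $\Phi(K_l)$ to $\Phi$ of a thin-in-$\xi$, long-in-$\xi^\perp$ body. Instead the John ellipsoid $Q_l\subset K_l\subset\sqrt n\,Q_l$ is used to bound the two terms of $\Phi_\mu(K_l)$ separately: $E_\mu(K_l)\le E_\mu(Q_l)$ from $Q_l\subset K_l$, while $\widetilde W_{n-q}(K_l)\le\widetilde W_{n-q}(\sqrt n\,G_l)$ where $G_l\supset Q_l$ is the barrier, a body \emph{containing} the degenerating sequence. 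Third, the spherical partition is not dyadic zones about $\xi$; it is $\Omega_{i,\delta}=\{v:|v\cdot e_i|\ge\delta,\ |v\cdot e_j|<\delta\text{ for }j>i\}$ indexed by the ordered John-ellipsoid axes, and the subspace mass inequality enters via an Abel-summation lemma. Finally, the point you do not anticipate --- and which is the paper's main technical novelty --- is the choice of barrier for non-integer $q\in(1,n-1)$: the Cartesian product of a $\lfloor q\rfloor$-dimensional ellipsoid, a single line segment, and an $(n-\lfloor q\rfloor-1)$-dimensional ball, yielding the sharp bound $\widetilde W_{n-q}(G)\le c\,a_1\cdots a_{\lfloor q\rfloor}\,a_{\lfloor q\rfloor+1}^{\,q-\lfloor q\rfloor}$ whose fractional exponent exactly matches the entropy estimate. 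Without this specific barrier (and the double generalized-spherical-coordinate computation needed to estimate it) the argument does not close for non-integer $q$.
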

When $0<q\leq 1$, the $q$-th subspace mass inequality says nothing
more than that the measure $\mu$ can not concentrate entirely in any
great hypersphere. In this case, Theorem \ref{main theorem} was proved in
\cite{HLYZ}.
When $1<q<n$, the necessity of the $q$-th subspace mass inequality was proved in
\cite{BH} and, its sufficiency,
when $q\in \{2, \ldots, n-1\}$, was proved in \cite{zhao}.

The dual Minkowski problem for $q<0$, as for the classical
Minkowski problem, does not require any non-trivial
measure concentration condition and was solved by Zhao \cite{zhao2}.
The dual Minkowski problem for even data and $q=0$ is equivalent to
the Aleksandrov problem for even data, which was solved by Aleksandrov himself. Another proof appears in \cite{HLYZ2}.
When $q = n$, the dual Minkowski problem for even data is the logarithmic Minkowski problem for even data, which was solved in \cite{BLYZ}.

Unlike the classical Minkowski problem, it is difficult to see how to reduce the $q > 0$ case of the dual Minkowski problem to the case where the measure has a density. Moreover, estimates for the dual quermassintegrals of degree $q > 0$ are much more difficult to obtain than when $q = n$, where the dual quermassintegral is just volume and only an entropy estimate is needed.
When $q > 0$, more delicate estimates for both entropy and the dual quermassintegrals are needed.

The proof presented here uses the variational approach.
The maximization problem associated with the
dual Minkowski problem is described in Section \ref{section maximization problem}. Its solution requires two crucial estimates.
In Section \ref{est1}, we prove an estimate for an entropy integral using the technique of spherical partitions introduced in \cite{BLYZ}. In Section \ref{5.1}, we establish a bound on the dual quermassintegral of a barrier convex body using general spherical coordinates.

The role of a barrier convex body for an integral estimate is the same as
that of a barrier function for PDE estimates.
 Choosing the right barrier and proving a sharp
 estimate are critical to showing that the $q$-th subspace mass inequality is both necessary and sufficient for solving the dual Minkowski problem.
However, for a dual quermassintegrals of any real degree $q > 0$,
the choice of the right barrier is much more subtle than in \cite{HLYZ} and \cite{zhao}. The sharp estimate of its dual quermassintegral then requires a more elaborate set of general spherical coordinates than in \cite{HLYZ}.

 In \cite{HLYZ}, a cross-polytope was used as the barrier to show that condition \eqref{smi1} is sufficient in the cases considered.
 In \cite{zhao}, using the Cartesian product of an ellipsoid and a ball
as the barrier shows that \eqref{smi2} is both
necessary and sufficient, but only for integer $q\in \{2,\ldots, n-1\}$.
 Here, the necessity and sufficiency of \eqref{smi2} for all $q \in (0,n)$ is established by setting the barrier equal to the Cartesian
 product of an ellipsoid, a line segment, and a ball.
 The estimates of its dual quermassintegrals appear in Section \ref{5.1}.

 The work presented here extends significantly the results and techniques
 in \cite{BLYZ}, \cite{HLYZ} and \cite{zhao}.

\section{Preliminaries}
\label{section preliminary}
Basics in the theory of convex bodies will be covered in this section.
More details can be found in the books \cite{MR2251886} and \cite{schneider2014}.

We will work in $\mathbb{R}^n$ equipped with the standard Euclidean norm.
For $x,y\in \mathbb{R}^n$, we write $x\cdot y$ for the inner product
of $x$ and $y$, and let $|x|=\sqrt{x\cdot x}$.
 The unit ball is written as $B^n$ and the unit sphere
as $S^{n-1}$. We use $\omega_n$ for the volume of $B^n$. Recall that
the surface area of $S^{n-1}$ is $n\omega_n$. We will use $C(S^{n-1})$
for the normed vector space of continuous functions on the unit
sphere $S^{n-1}$ equipped with the max norm; i.e.,
$\|f\| = \max\{|f(u)|:u\in S^{n-1}\}$ for each $f\in C(S^{n-1})$.
 Let $C^+(S^{n-1})\subset C(S^{n-1})$ denote the cone of positive
functions, $C_e(S^{n-1})\subset C(S^{n-1})$ the subspace of
even functions, and $C_e^+(S^{n-1}) = C^+(S^{n-1})\cap C_e(S^{n-1})$.
The total measure of a given
finite Borel measure $\mu$ will be written as $|\mu|$. Throughout the paper,
an expression $c(\cdots)$ denotes a ``constant'' whose exact value depends on the parameters listed but may change from line to line. For example, $c(n,k,q)$ is a constant that depends only on $n$, $k$, $q$ and nothing else.
Denote by $\lfloor q \rfloor$ the floor function whose value is the largest
integer less than or equal to $q$.

We say that $K \subset \R^n$ is a \emph{convex body}
if it is a compact convex set with non-empty interior. The boundary
of $K$ is written as $\partial K$. The set of all convex bodies is
denoted by $\mathcal{K}^n$. The set of all convex bodies containing
the origin in the interior is denoted by $\mathcal{K}_o^n$, and the
set of all origin-symmetric convex bodies by
$\mathcal{K}_e^n$. Obviously,
$\mathcal{K}_e^n\subset \mathcal{K}_o^n\subset \mathcal{K}^n$.

Associated with each compact convex subset $K$ in $\mathbb{R}^n$ is
its \emph{support function} $h_K:S^{n-1}\rightarrow \mathbb{R}$ defined by
\begin{equation}
\label{2.1}
h_K(v)=\max\{x\cdot v: x\in K\}.
\end{equation}
When $K\in \mathcal{K}_o^n$, its \emph{radial function} (with respect to the origin)
$\rho_K:S^{n-1}\rightarrow \mathbb{R}$ is defined by
\begin{equation}
\label{2.2}
\rho_K(u)=\max\{t>0:tu\in K\}.
\end{equation}
If $K\in \mathcal{K}_o^n$, then both $h_K$ and $\rho_K$ are positive.
The volume of $K$ with respect to standard Lebesgue measure is denoted $V(K)$. It is well-known that the volume of $K$ may be computed by integrating
the $n$-th power of the radial function, i.e.,
\begin{equation}
\label{eq volume}
V(K)=\frac{1}{n}\int_{S^{n-1}}\rho_K^n(u)du,
\end{equation}
where $du$ is spherical Lebesgue measure.

We say
that a sequence of convex bodies $K_l$
converges to a compact convex set $K\subset \mathbb{R}^n$ in the \emph{Hausdorff metric} if
\begin{equation*}
\|h_{K_l}-h_K\|\rightarrow 0.
\end{equation*}

For each $h\in C^+(S^{n-1})$, the \emph{Wulff shape} generated by $h$,
denoted $\blb h\brb$, is the convex body defined by
\begin{equation*}
\blb h \brb=\{x\in\mathbb{R}^n: x\cdot v \leq h(v) \text{ for all } v \in S^{n-1}\}.
\end{equation*}
The Wulff shape, also known as the Aleksandrov body, is a key ingredient
in Aleksandrov's volume variational formula, which is essential to the solution
of the classical Minkowski problem. It is easy to see that
\begin{equation}
\label{2.4}
h_{\sblb h \sbrb}\leq h,
\end{equation}
and if $K\in \mathcal{K}_o^n$, then
\begin{equation}
\label{2.5}
\blb h_K \brb =K.
\end{equation}

Given $h_0 \in C^+(S^{n-1}), f \in C(S^{n-1})$, and $\delta>0$, define
$h_t:S^{n-1}\rightarrow (0,\infty)$ for each $t\in (-\delta,\delta)$ by letting
\begin{equation}
\log h_t(v) = \log h_0(v)+tf(v)+o(t,v), v \in S^{n-1},
\end{equation}
where $o(t,\cdot) \in C(S^{n-1})$ satisfies
\[
  \lim_{t\rightarrow 0} \frac{\|o(t,\cdot)\|}{t} = 0.
\]
The family of Wulff shapes generated by $h_t$ is called
a family of logarithmic Wulff shapes generated by $h_0$ and $f$. We sometimes
denote the family $\blb h_t \brb$ by $\blb h_0,f,t\brb$, or simply denote it by $\blb K,f,t\brb$
when $h_0$ is the support function of a convex body $K$.

Assume that $K\in \mathcal{K}_o^n$. The \emph{supporting hyperplane} of
$K$ at $v\in S^{n-1}$ is given by
\begin{equation*}
H_K(v)=\{x\in \mathbb{R}^{n}:x\cdot v = h_K(v)\}.
\end{equation*}
At each boundary point $x\in \partial K$, a vector $v\in S^{n-1}$ is
called an \emph{outer unit normal} of $K$ at $x\in \partial K$ if $x \in H_K(v)$.

Let $\omega\subset S^{n-1}$ be a Borel set. The \emph{radial Gauss image}
$\bm{\alpha}_K(\omega)$, of $K$ at $\omega$, is the set of all outer
unit normals of $K$ at boundary points $\rho_K(u)u$ where $u\in \omega$, i.e.,
\begin{equation}
\bm{\alpha}_K(\omega)=\{v\in S^{n-1}: \text{ there exists }u\in \omega
\text{ such that } \rho_K(u)u\cdot v= h_K(v)\}.
\end{equation}

Let $\eta \subset S^{n-1}$ be a Borel set. The \emph{reverse radial Gauss image}
$\bm{\alpha}_K^*(\eta)$, of $K$ at $\eta$, is the set of all radial directions
 $u\in S^{n-1}$ such that the boundary point $\rho_K(u)u$ has at least one
 element in $\eta$ as its outer unit normal, i.e.,
\begin{equation}
\bm{\alpha}_K^*(\eta)=\{u\in S^{n-1}: \text{there exists } v\in
\eta \text{ such that } \rho_K(u)u\cdot v = h_K(v)\}.
\end{equation}
It was shown in Lemma 2.2.14 of Schneider \cite{schneider2014}
(see also Lemma 2.1 in \cite{HLYZ}) that when $\eta$ is a Borel set,
the set $\bm{\alpha}_K^*(\eta)$ is spherical Lebesgue measurable.

Dual quermassintegrals, which include volume as a special case, are
fundamental geometric invariants in the dual Brunn-Minkowski theory.
For $i=1,\cdots, n$, the $(n-i)$-th \emph{dual quermassintegral}
$\widetilde{W}_{n-i}(K)$ of $K\in \mathcal{K}_o^n$ is proportional
to the mean of $i$-dimensional volumes of the intersections of $K$
and all $i$-dimensional subspaces. That is,
\begin{equation}\label{2.6}
\widetilde{W}_{n-i}(K)=\frac{\omega_n}{\omega_i}\int_{G(n,i)} V_i(K\cap\xi)d\xi,
\end{equation}
where $G(n,i)$ is the Grassmannian manifold of $i$-dimensional linear
subspaces $\xi \subset \mathbb{R}^n$ and the integration is with respect
to the Haar measure on $G(n,i)$. Here $V_i(K\cap \xi)$ denotes the
$i$-th dimensional volume of $K\cap \xi$.
The dual quermassintegrals have the following integral representation (see \cite{L75paci}),
\begin{equation}
\label{2.9}
\widetilde{W}_{n-i}(K)=\frac{1}{n}\int_{S^{n-1}}\rho_K^i(u)du.
\end{equation}
Using this formula, we can define $\widetilde{W}_{n-q}$ for
all $q\in \mathbb{R}$ in the same manner as \eqref{2.9},
\begin{equation}\label{2.7}
\widetilde{W}_{n-q}(K)=\frac{1}{n}\int_{S^{n-1}}\rho_K^q(u)du.
\end{equation}
It is easy to see that the $(n-q)$-th dual quermassintegral is homogeneous
of degree $q$. That is,
\begin{equation*}
\widetilde{W}_{n-q}(cK) = c^q\widetilde{W}_{n-q}(K),
\end{equation*}
for each $K\in\mathcal{K}_o^n$ and $c>0$.

Let $\mu$ be a non-zero finite Borel measure on $S^{n-1}$.
Define the {\em entropy functional} $E_\mu : C^+(\sn) \to \mathbb R$ by
\begin{equation}\label{2.10}
E_\mu(f)=-\frac{1}{|\mu|}\int_{S^{n-1}}\log f(v) \, d\mu(v), \quad f\in C^+(\sn).
\end{equation}
When $f$ is the support function $h_K$ of a convex body $K$, let
\begin{equation}\label{2.11}
E_\mu(K) = E_\mu(h_K).
\end{equation}

\section{The even dual Minkowski problem via maximization}
\label{section maximization problem}

Dual curvature measures in the dual Brunn-Minkowski theory are the counterparts
of curvature measures in the classical Brunn-Minkowski theory. This fundamental
insight was used by Huang-Lutwak-Yang-Zhang \cite{HLYZ} to reformulate the dual
Minkowski problem as the maximization problems described below.

Let $q\in \mathbb{R}$ and $K$ be a convex body in $\mathbb{R}^n$ containing
the origin in its interior. The $q$-th dual curvature measure of $K$, denoted
by $\widetilde{C}_q(K,\cdot)$, can be viewed as a differential of the dual 
quermassintegral $\wt W_{n-q}$ as given by the following variational formula,
\[
\frac{d}{dt} \wt W_{n-q}(\blb K,f,t\brb) \Big|_{t=0} = q \int_{\sn} f(v)\, 
\widetilde{C}_q(K,v),
\]
for $q\neq0$ and $f\in C(\sn)$. There is a similar formula for the case of $q=0$.
The $q$-th dual curvature measure has the following
explicit integral representation,
\begin{equation}
\label{dcm}
\widetilde{C}_q(K,\eta) = \frac{1}{n}\int_{\bm{\alpha}_K^*(\eta)}\rho_K^q(u)du,
\end{equation}
for each Borel set $\eta \subset S^{n-1}$. There is also a Steiner-type formula
associated with dual curvature measures similar to the Steiner formulas for area
and curvature measures, see \cite{HLYZ} for details.

Huang-Lutwak-Yang-Zhang \cite{HLYZ} posed the \emph{dual Minkowski problem},
which asks for necessary and sufficient conditions on a given Borel
measure $\mu$ on $\sn$ so that it is exactly the $q$-th dual curvature
measure of a convex body in $\rn$. Since the unit balls of finite dimensional
Banach spaces are origin-symmetric convex bodies and the dual curvature
measure of an origin-symmetric convex body is even, it is of great
interest to study the following \emph{even dual Minkowski problem}.
\medskip

\noindent
{\bf The Even Dual Minkowski Problem}: {\it
Given an even finite Borel measure $\mu$ on $S^{n-1}$ and $q\in \mathbb{R}$,
 find necessary and sufficient conditions on $\mu$ so that there exists
 an origin-symmetric convex body $K$ in $\rn$ such that
\[
\widetilde{C}_q(K,\cdot) =\mu.
\]
}

When $q=0$, the even dual Minkowski problem is the even Aleksandrov problem,
whose solution was given by Aleksandrov. When $q=n$, the even dual Minkowski problem is the even logarithmic Minkowski problem, whose solution was given by
B\"{o}r\"{o}czky-Lutwak-Yang-Zhang \cite{BLYZ}.

The even dual Minkowski problem when $0<q<n$ was studied in Huang-Lutwak-Yang-Zhang \cite{HLYZ}.
Mass inequalities \eqref{smi1} and \eqref{smi1.1} were shown to be sufficient
for the existence of solutions.
When $0 < q \le 1$, equation \eqref{smi1.1} is both sufficient and necessary
and therefore the even dual Minkowski problem is completely solved.
However, for $1<q<n$, examples
of convex bodies whose dual curvature measures violate \eqref{smi1} were found
in \cite{BH} and \cite{zhao}, showing that \eqref{smi1} is not a necessary condition.
The $q$-th subspace
mass inequality \eqref{smi2} was defined independently in \cite{BH} and \cite{zhao}.
In \cite{BH}, it was shown that, when $1<q<n$, \eqref{smi2} is a necessary condition.
In \cite{zhao}, it was shown that, for $q=2,\cdots, n-1$,
\eqref{smi2} is also a sufficient condition.

It is the aim of this paper to give a complete solution to the even dual Minkowski problem for $1<q<n$. Specifically, we shall prove that,
when $1<q<n$, the $q$-th subspace condition
is both necessary and sufficient for the existence of a solution to the even dual Minkowski problem.

We use the variational method to solve the even dual Minkowski problem.
Here, for completeness, we recall results from \cite{HLYZ}, but
give a slightly different treatment.

The maximization problem whose Euler-Lagrange equation is the equation of the
dual Minkowski problem was formulated in \cite{HLYZ}. To derive the Euler-Lagrange
equation of the maximization problem,
the following variational formula established in \cite{HLYZ} is critical.
If $q\neq 0$, then
\begin{equation}
\label{3.2}
\left.\frac{d}{dt} \widetilde{W}_{n-q}(\blb h_0,f,t \brb)\right|_{t=0}
= q\int_{S^{n-1}}f(v)d\widetilde{C}_q(\blb h_0 \brb, v),
\end{equation}
for each $h_0\in C^+(S^{n-1})$ and $f\in C(S^{n-1})$. Here $\blb h_0,f,t \brb$ is
the logarithmic family of Wulff shapes generated by $h_0$ and $f$,
as defined in Section \ref{section preliminary}. The corresponding
formula when $q=0$ is also given in \cite{HLYZ}.

Let $\mu$ be a non-zero finite even Borel measure on $S^{n-1}$ and $q\neq 0$.
Define the functional $\Phi_\mu : C_e^+(S^{n-1})\rightarrow \mathbb{R}$ to be
\begin{equation}\label{3.3}
\Phi_\mu(f) = E_\mu(f)+\frac{1}{q}\log\widetilde{W}_{n-q}(\blb f\brb),
\quad f\in C_e^+(S^{n-1}).
\end{equation}
\smallskip

\noindent
{\bf Maximization Problem I.}\ For a given non-zero finite even Borel measure $\mu$ on
$\sn$, does there exist an even positive continuous function on $\sn$ that attains the supremum
\begin{equation}
\sup\left\{\Phi_\mu(f):f\in C_e^+(S^{n-1})\right\}?
\end{equation}
\smallskip

Note that the set of support functions of convex bodies in $\mathcal K_e^n$
is a convex sub-cone of  $C_e^+(S^{n-1})$. If the functional $\Phi_\mu$ is restricted
to this sub-cone and the support function of a convex body is identified with the convex
body, the functional $\Phi_\mu$ can be treated as a functional on $\mathcal K_e^n$,
$\Phi_\mu : \mathcal K_e^n \rightarrow \mathbb{R}$, given by
\begin{equation}\label{3.4}
\Phi_\mu(K) = E_\mu(K)+\frac{1}{q}\log\widetilde{W}_{n-q}(K),
\quad K \in \mathcal K_e^n.
\end{equation}
In particular,
\begin{equation}\label{3.5}
\Phi_\mu(K) = \Phi_\mu(h_K).
\end{equation}

This leads to the following variational problem.
\smallskip

\noindent
{\bf Maximization Problem II.}\ For a given non-zero finite even Borel measure $\mu$ on
$\sn$, does there exist a convex body in $\mathcal K_e^n$ that attains the supremum,
\begin{equation}
\sup\left\{\Phi_\mu(K) : K \in \mathcal K_e^n  \right\}?
\end{equation}
\smallskip

The following lemma shows that, if we identify a convex body $K$ with its support function $h_K$, then a solution to
Maximization Problem II is  a solution to
Maximization Problem I.

\begin{lemma}\label{3.12}
Let $\mu$ be a non-zero even finite Borel measure
on $S^{n-1}$ and $q$ a non-zero real number. If there exists $K_0\in\mathcal{K}_e^n$
such that
\begin{equation}\label{3.6}
\Phi_\mu(K_0)=\sup\left\{\Phi_\mu(K) : K\in \mathcal{K}_e^n\right\},
\end{equation}
then
\begin{equation}\label{3.7}
 \Phi_\mu(h_{K_0})=\sup\left\{\Phi_\mu(f):f\in C_e^+(S^{n-1})\right\}.
\end{equation}
\end{lemma}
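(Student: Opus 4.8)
The plan is to show that the two supremum values in \eqref{3.6} and \eqref{3.7} coincide, and that the maximizer $K_0$ realizes the larger sup as well. Since the set of support functions $\{h_K : K \in \mathcal K_e^n\}$ is a subset of $C_e^+(S^{n-1})$, we automatically have
\[
\sup\{\Phi_\mu(K) : K \in \mathcal K_e^n\} = \sup\{\Phi_\mu(h_K) : K \in \mathcal K_e^n\} \le \sup\{\Phi_\mu(f) : f \in C_e^+(S^{n-1})\},
\]
using \eqref{3.5}. So the entire content of the lemma is the reverse inequality: every $f \in C_e^+(S^{n-1})$ satisfies $\Phi_\mu(f) \le \Phi_\mu(h_{K_0})$. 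The natural device is to compare $f$ with the Wulff shape it generates, $\blb f\brb \in \mathcal K_e^n$ (it is origin-symmetric because $f$ is even, and it contains the origin in its interior because $f$ is positive and continuous hence bounded below by a positive constant).

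The key step is the inequality $\Phi_\mu(f) \le \Phi_\mu(\blb f\brb)$. I would prove this by examining the two terms of $\Phi_\mu$ in \eqref{3.3} separately. For the second term, recall from \eqref{2.4} that $h_{\sblb f\sbrb} \le f$ pointwise on $S^{n-1}$; since $\widetilde W_{n-q}$ is given by the monotone integral formula \eqref{2.7}, and $\blb f\brb = \blb h_{\sblb f\sbrb}\brb$ so that $\rho_{\sblb f\sbrb}$ is determined by $h_{\sblb f\sbrb}$, one gets that replacing $f$ by $h_{\sblb f\sbrb}$ does not change $\widetilde W_{n-q}(\blb f\brb)$ — indeed $\widetilde W_{n-q}(\blb f\brb)$ depends only on the body $\blb f\brb$, not on the generating function $f$. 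For the first term, $E_\mu$ from \eqref{2.10} is monotone decreasing in its argument (because of the minus sign in front of $\int \log$): since $h_{\sblb f\sbrb} \le f$, we get $E_\mu(h_{\sblb f\sbrb}) \ge E_\mu(f)$. Combining, $\Phi_\mu(\blb f\brb) = E_\mu(h_{\sblb f\sbrb}) + \tfrac1q \log \widetilde W_{n-q}(\blb f\brb) \ge E_\mu(f) + \tfrac1q \log \widetilde W_{n-q}(\blb f\brb) = \Phi_\mu(f)$. Then $\Phi_\mu(f) \le \Phi_\mu(\blb f\brb) \le \sup\{\Phi_\mu(K) : K \in \mathcal K_e^n\} = \Phi_\mu(K_0) = \Phi_\mu(h_{K_0})$, and taking the supremum over $f$ gives \eqref{3.7}.

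The one point requiring care — and the place I would expect to spend the most thought — is the sign and monotonicity of the term $\tfrac1q \log \widetilde W_{n-q}$ when $q < 0$: the logarithm is still well-defined since $\widetilde W_{n-q}(\blb f\brb) > 0$ for a body with nonempty interior, but one must be sure that the argument "$\widetilde W_{n-q}(\blb f\brb)$ depends only on the body" is all that is used, so that no monotonicity in $q$-sign is actually invoked for that term; the only monotonicity used is that of $E_\mu$, which holds for any sign of $q$ since it comes purely from the $-\int\log$ structure. A secondary check is that $\blb f\brb$ really lies in $\mathcal K_e^n$ and not merely $\mathcal K_o^n$: evenness of $f$ forces $x \cdot v \le f(v)$ to be equivalent to $(-x)\cdot v \le f(v)$ after $v \mapsto -v$, so $\blb f\brb = -\blb f\brb$. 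With these verifications in place the argument is complete.
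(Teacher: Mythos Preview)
Your proof is correct and follows essentially the same route as the paper: compare an arbitrary $f$ to its Wulff shape $\blb f\brb\in\mathcal K_e^n$, use $h_{\sblb f\sbrb}\le f$ to get $E_\mu(\blb f\brb)\ge E_\mu(f)$ while the $\widetilde W_{n-q}$ term is unchanged, and then invoke the maximality of $K_0$ over $\mathcal K_e^n$. Your remarks on the sign of $q$ and the evenness of $\blb f\brb$ are sound sanity checks that the paper leaves implicit.
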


\begin{proof}
Let $f\in C_e^+(S^{n-1})$. Note that $\blb f \brb\in \mathcal{K}_e^n$.
By \eqref{3.5} and \eqref{3.6},
\begin{equation*}
	\Phi_\mu(h_{K_0})=\Phi_\mu(K_0)\geq \Phi_\mu(\blb f \brb).
\end{equation*}
By \eqref{2.4} and \eqref{2.10},
\begin{equation}\label{3.8}
E_\mu(\blb f \brb)\geq E_\mu(f),
\end{equation}
for each $f\in C_e^+(S^{n-1})$.
By \eqref{3.8},
\begin{equation*}
	\begin{aligned}
\Phi_\mu(\blb f \brb)&=E_\mu(\blb f \brb)+\frac{1}{q}\log \widetilde{W}_{n-q}(\blb f \brb)\\
		&\geq E_\mu(f)+\frac{1}{q}\log \widetilde{W}_{n-q}(\blb f\brb)\\
		&=\Phi_\mu(f).
	\end{aligned}
\end{equation*}
Hence, $\Phi_\mu(h_{K_0})\geq \Phi_\mu(f)$ for all $f\in C_e^+(S^{n-1})$, proving the lemma.
\end{proof}

The next lemma shows that a solution to Maximization Problem I is a solution
to the even dual Minkowski problem.

\begin{lemma}\label{3.9}
Let $\mu$ be a non-zero even finite Borel measure
on $S^{n-1}$ and $q\neq 0$. If there exists $K_0\in\mathcal{K}_e^n$ such that
\begin{equation*}
	\Phi_\mu(h_{K_0})=\sup\left\{\Phi_\mu(f):f\in C_e^+(S^{n-1})\right\},
\end{equation*}
then there exists $c>0$ such that
\begin{equation*}
	\mu=\widetilde{C}_q(cK_0,\cdot).
\end{equation*}
\end{lemma}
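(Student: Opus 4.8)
The plan is to derive the Euler--Lagrange equation of Maximization Problem~I at the maximizer $h_{K_0}$ and then identify it with the dual Minkowski equation after rescaling. First I would observe that since $h_{K_0} \in C_e^+(S^{n-1})$ is a maximizer over the open cone $C_e^+(S^{n-1})$, for every fixed $f \in C_e(S^{n-1})$ the function $t \mapsto \Phi_\mu(h_{K_0} e^{tf})$ of the real variable $t$ (this is exactly a logarithmic family of Wulff shapes generated by $h_{K_0}$ and the \emph{even} perturbation $f$, with $o(t,\cdot)=0$, so it stays in $C_e^+(S^{n-1})$) attains its maximum at $t = 0$. Hence its derivative vanishes at $t=0$, provided that derivative exists.

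Next I would compute that derivative term by term using the definition \eqref{3.3}. The entropy part $E_\mu(h_{K_0}e^{tf}) = E_\mu(h_{K_0}) - \frac{t}{|\mu|}\int_{S^{n-1}} f\, d\mu$ is linear in $t$, so its $t$-derivative at $0$ is $-\frac{1}{|\mu|}\int_{S^{n-1}} f\, d\mu$. For the dual quermassintegral part, I would invoke the variational formula \eqref{3.2}: with $h_0 = h_{K_0}$ and the perturbation $f$, and using $\blb h_{K_0}\brb = K_0$ from \eqref{2.5}, one gets
\[
\left.\frac{d}{dt}\widetilde W_{n-q}(\blb h_{K_0}, f, t\brb)\right|_{t=0} = q\int_{S^{n-1}} f(v)\, d\widetilde C_q(K_0, v).
\]
Combining via the chain rule applied to $\frac{1}{q}\log\widetilde W_{n-q}$, the stationarity condition $\frac{d}{dt}\Phi_\mu(h_{K_0}e^{tf})|_{t=0} = 0$ becomes
\[
-\frac{1}{|\mu|}\int_{S^{n-1}} f\, d\mu + \frac{1}{\widetilde W_{n-q}(K_0)}\int_{S^{n-1}} f\, d\widetilde C_q(K_0,v) = 0
\]
for every $f \in C_e(S^{n-1})$. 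Since both $\mu$ and $\widetilde C_q(K_0,\cdot)$ are even measures, testing against all even continuous $f$ forces the measure identity $\widetilde C_q(K_0,\cdot) = \frac{\widetilde W_{n-q}(K_0)}{|\mu|}\,\mu$. Writing $\lambda = \widetilde W_{n-q}(K_0)/|\mu| > 0$ and using the degree-$q$ homogeneity $\widetilde C_q(cK_0,\cdot) = c^q \widetilde C_q(K_0,\cdot)$ (which follows from \eqref{dcm} and $\rho_{cK_0} = c\,\rho_{K_0}$), we choose $c = \lambda^{-1/q}$ to conclude $\widetilde C_q(cK_0,\cdot) = \mu$.

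The main obstacle is justifying the differentiability step, i.e.\ that $t\mapsto \widetilde W_{n-q}(\blb h_{K_0},f,t\brb)$ is differentiable at $t=0$ with the value given by \eqref{3.2} --- but this is precisely the content of the variational formula \eqref{3.2} quoted from \cite{HLYZ}, which applies to any $h_0 \in C^+(S^{n-1})$ and any $f\in C(S^{n-1})$, so it covers our even situation directly. A secondary technical point is to ensure the perturbed functions remain in the domain $C_e^+(S^{n-1})$: since $h_{K_0}$ is bounded below by a positive constant and $f$ is bounded, $h_{K_0}e^{tf}$ is positive and even for all $t$, so this is immediate. One should also record that the two-sided derivative existing and the maximum being interior gives the \emph{equality} (not just inequality) in the first-order condition, which is what yields the measure identity rather than merely an inequality; this requires nothing beyond elementary calculus once \eqref{3.2} is in hand.
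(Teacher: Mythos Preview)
Your proof is correct and follows essentially the same approach as the paper's: both take a logarithmic perturbation $h_t=h_{K_0}e^{tf}$, differentiate $\Phi_\mu$ at $t=0$ via \eqref{3.2}, and use that the resulting identity holds for all even $f$ to match the two even measures. The only cosmetic difference is that the paper first rescales $K_0$ (using the degree-$0$ homogeneity of $\Phi_\mu$) so that $\widetilde W_{n-q}(cK_0)=|\mu|$ before computing the Euler--Lagrange equation, whereas you compute first and then rescale using the degree-$q$ homogeneity of $\widetilde C_q$; these are equivalent.
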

\begin{proof}
Since the $(n-q)$-th dual quermassintegral is homogeneous of degree $q$,
 we may choose $c>0$ so that
\begin{equation}\label{3.10}
\widetilde{W}_{n-q}(cK_0)=c^q\widetilde{W}_{n-q}(K_0)=|\mu|.
\end{equation}
Since $\Phi_\mu$ is homogeneous of degree $0$,
\begin{equation}\label{3.11}
\Phi_\mu(h_{cK_0})=\Phi_\mu(h_{K_0})
=\sup\left\{\Phi_\mu(f):f\in C_e^+(S^{n-1})\right\}.
\end{equation}
For each $g\in C_e(S^{n-1})$, define $h_t\in C_e^+(S^{n-1})$ by
\begin{equation*}
	\log h_t(v)=\log h_{cK_0}(v)+tg(v), \forall v\in S^{n-1}.
\end{equation*}
By \eqref{3.11},
\begin{equation*}
	\Phi_\mu(h_0)=\Phi_\mu(h_{cK_0})\leq \Phi_\mu(h_t).
\end{equation*}
Hence, by the definition of $\Phi_\mu$ and $E_\mu$, \eqref{3.2}, and \eqref{3.10},
\begin{equation*}
\begin{aligned}
0 &= \left.\frac{d}{dt}\Phi_\mu(h_t)\right|_{t=0}\\
  &=\left.\frac{d}{dt}\left(E_\mu(h_t)+\frac{1}{q}\log
   \widetilde{W}_{n-q}([cK_0,g,t])\right)\right|_{t=0}\\
  &=-\frac{1}{|\mu|}\int_{S^{n-1}}g(v)\, d\mu(v)+\frac{1}{\widetilde{W}_{n-q}(cK_0)}
   \int_{S^{n-1}}g(v) \, d\widetilde{C}_q(cK_0,v)\\
  &=\frac{1}{|\mu|}\left(-\int_{S^{n-1}}g(v) \, d\mu(v)+\int_{S^{n-1}}g(v) \,
   d\widetilde{C}_q(cK_0,v)\right).
	\end{aligned}
\end{equation*}
Since this holds for any $g\in C_e(S^{n-1})$, it follows that
\begin{equation*}
	\mu = \widetilde{C}_q(cK_0,\cdot).
\end{equation*}
\end{proof}

By Lemmas \ref{3.9} and \ref{3.12}, a solution to Maximization Problem II
is a solution to the even dual Minkowski problem. This is stated formally in the
following lemma.

\begin{lemma}\label{3.13}
Let $\mu$ be a non-zero even finite Borel measure on $S^{n-1}$ and $q\neq 0$.
If there exists $K_0\in\mathcal{K}_e^n$ such that
\begin{equation*}
	\Phi_\mu(K_0)=\sup\left\{\Phi_\mu(K) : K\in \mathcal{K}_e^n\right\},
\end{equation*}
then there exists $c>0$ such that
\begin{equation*}
	\mu=\widetilde{C}_q(cK_0,\cdot).
\end{equation*}
\end{lemma}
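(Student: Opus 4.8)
The plan is to obtain this statement as an immediate consequence of the two preceding lemmas, which together already contain all the substance. Suppose $K_0\in\mathcal{K}_e^n$ realizes the supremum in Maximization Problem II. The first step is to invoke Lemma \ref{3.12}: since the support functions of bodies in $\mathcal{K}_e^n$ form a convex sub-cone of $C_e^+(S^{n-1})$, and since for every $f\in C_e^+(S^{n-1})$ the Wulff shape $\blb f\brb$ is again origin-symmetric with $h_{\sblb f\sbrb}\le f$ by \eqref{2.4} — so that $E_\mu(\blb f\brb)\ge E_\mu(f)$ while $\widetilde{W}_{n-q}(\blb f\brb)$ is computed from the body $\blb f\brb$ itself — the body $K_0$ is in fact a maximizer of $\Phi_\mu$ over all of $C_e^+(S^{n-1})$. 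That is, $\Phi_\mu(h_{K_0})=\sup\{\Phi_\mu(f):f\in C_e^+(S^{n-1})\}$, which is precisely a solution to Maximization Problem I.

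The second step is to apply Lemma \ref{3.9} to this maximizer. After rescaling $K_0\mapsto cK_0$ so that $\widetilde{W}_{n-q}(cK_0)=|\mu|$ — possible because $\widetilde{W}_{n-q}$ is homogeneous of degree $q\ne 0$ — and using the $0$-homogeneity of $\Phi_\mu$, the body $cK_0$ is still a maximizer. Then for each $g\in C_e(S^{n-1})$ the logarithmic family $h_t$ defined by $\log h_t=\log h_{cK_0}+tg$ has $\Phi_\mu(h_t)$ attaining its maximum at $t=0$, so $\frac{d}{dt}\Phi_\mu(h_t)\big|_{t=0}=0$; expanding this derivative via the variational formula \eqref{3.2} and the definition \eqref{2.10} of $E_\mu$ yields $\int_{S^{n-1}}g\,d\mu=\int_{S^{n-1}}g\,d\widetilde{C}_q(cK_0,\cdot)$ for all even continuous $g$, hence $\mu=\widetilde{C}_q(cK_0,\cdot)$. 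Chaining these two steps gives the claim with the same constant $c$.

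There is essentially no obstacle at this level: Lemma \ref{3.13} is a formal corollary of Lemmas \ref{3.12} and \ref{3.9}, and the only points requiring a word of care are that the Wulff shape of an even function is again origin-symmetric (so the reduction in Lemma \ref{3.12} stays within $\mathcal{K}_e^n$) and that the test functions $g$ in Lemma \ref{3.9} need only range over $C_e(S^{n-1})$, which already suffices to separate even Borel measures. The genuinely difficult part of the overall argument — the \emph{existence} of a maximizer for Maximization Problem II under the $q$-th subspace mass inequality \eqref{smi2} — lies elsewhere, in the entropy estimate of Section \ref{est1} and the sharp dual-quermassintegral estimate for the barrier body in Section \ref{5.1}, and is not needed for the present lemma.
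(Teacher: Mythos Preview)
Your proposal is correct and follows exactly the paper's approach: Lemma~\ref{3.13} is stated as an immediate formal consequence of Lemmas~\ref{3.12} and~\ref{3.9}, and your recap of those two steps (Wulff-shape comparison for the reduction, then the variational argument after normalizing $\widetilde{W}_{n-q}(cK_0)=|\mu|$) matches the paper's reasoning precisely.
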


Therefore, to solve the even dual Minkowski problem,
it suffices to solve Maximization Problem II.
Solving Maximization Problem II requires delicate estimates for the functional $E_\mu$ and
the quermassintegral $\wt W_{n-q}$, which will be dealt with in the next two sections.

\section{Estimates for the entropy functional $E_\mu$}
\label{est1}

In this section, we will estimate the functional $E_\mu$
under the assumption that $\mu$ satisfies the subspace mass inequality \eqref{smi2}.

Let $q>0$ be a real number. Recall that a non-zero finite even
Borel measure $\mu$ on $S^{n-1}$ satisfies the
\emph{$q$-th subspace mass inequality} if
\begin{equation}
\label{4.1}
\frac{\mu(\xi\cap S^{n-1})}{|\mu|}<
\begin{cases}
\frac{i}{q}, & i<q,\\
1, & i\geq q,
\end{cases}
\end{equation}
for each $i$-dimensional subspace $\xi\subset \mathbb{R}^n$.
We assume,
for the rest of this section, that $1<q<n$ and $\mu$ is a non-zero finite even
Borel measure on $S^{n-1}$ satisfying the subspace mass inequality \eqref{4.1}.

The key technique for estimating $E_\mu$ is to use an appropriate
spherical partition. This approach was first introduced in \cite{BLYZ}.
Let $e_1,\cdots, e_n$ be an orthonormal basis in $\mathbb{R}^n$.
For each $\delta\in (0,\frac{1}{\sqrt{n}})$, define
a partition $\{\Omega_{i,\delta}\}_{i=1}^n$ of $\sn$, where
\begin{equation}\label{4.2}
\Omega_{i,\delta}=\{v\in S^{n-1}: |v\cdot e_i|\geq \delta \text{ and }|v\cdot e_j|<\delta
\text{ for all } j>i\},
\end{equation}
for $i=1,\cdots, n$ and $\delta>0$.

For notational simplicity, let
\begin{equation*}
\xi_i = \Span\{e_1,\cdots, e_i\}, \quad i=1,\cdots, n,
\end{equation*}
and
$\xi_0 = \{0\}$.

It was shown in \cite{BLYZ} that
for any non-zero finite Borel measure $\mu$ on $S^{n-1}$,
\begin{equation}\label{4.3}
\lim_{\delta\rightarrow 0^+}\mu(\Omega_{i,\delta})
=\mu((\xi_i\setminus \xi_{i-1})\cap S^{n-1}),
\end{equation}
and, therefore,
\begin{equation}\label{4.4}
\lim_{\delta\rightarrow 0^+}\big(\mu(\Omega_{1,\delta}) + \cdots +
\mu(\Omega_{i,\delta})\big)=\mu(\xi_i\cap S^{n-1}).
\end{equation}
We also will need the following elementary lemma.

\begin{lemma}\label{4.5}
Let $\lambda_1,\cdots, \lambda_n, x_1,\cdots, x_n\in \mathbb{R}$.
Assume that $x_1\leq x_2\leq \cdots\leq x_n$, $\lambda_i\geq 0$,
and $\lambda_1+\cdots+\lambda_n=1$.
Suppose there exists $\sigma_1,\cdots, \sigma_n\in \mathbb{R}$
with $\sigma_n=1$ such that
\begin{equation}\label{4.8}
\lambda_1+\cdots+\lambda_i\leq \sigma_i, \quad i=1,\cdots, n.
\end{equation}
Then
\begin{equation*}
\sum_{i=1}^n\lambda_i x_i\geq \sum_{i=1}^{n}(\sigma_i-\sigma_{i-1})x_i,
\end{equation*}
where $\sigma_0=0$.
\end{lemma}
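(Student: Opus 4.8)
The plan is to prove this Abel-summation-type inequality by rewriting both sides using summation by parts and then exploiting the monotonicity of the $x_i$ together with the hypothesis \eqref{4.8}. First I would set $L_i = \lambda_1 + \cdots + \lambda_i$ for $i = 1, \ldots, n$ and $L_0 = 0$, so that $\lambda_i = L_i - L_{i-1}$, and observe that $L_n = 1 = \sigma_n$ while $L_i \le \sigma_i$ for every $i$ by hypothesis. The right-hand side is already written in the analogous telescoped form with $\sigma_i - \sigma_{i-1}$ in place of $\lambda_i$, so the natural move is to subtract the two sides and show the difference is nonnegative.

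Next I would carry out the Abel summation on the difference
\[
\sum_{i=1}^n \lambda_i x_i - \sum_{i=1}^n (\sigma_i - \sigma_{i-1}) x_i
= \sum_{i=1}^n \big((L_i - L_{i-1}) - (\sigma_i - \sigma_{i-1})\big) x_i.
\]
Writing $D_i = L_i - \sigma_i$ (so $D_0 = 0$ and $D_n = 0$), the bracket is $D_i - D_{i-1}$, and summation by parts gives
\[
\sum_{i=1}^n (D_i - D_{i-1}) x_i = D_n x_n - D_0 x_1 - \sum_{i=1}^{n-1} D_i (x_{i+1} - x_i) = - \sum_{i=1}^{n-1} D_i (x_{i+1} - x_i),
\]
using $D_n = D_0 = 0$. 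By hypothesis \eqref{4.8}, $D_i = L_i - \sigma_i \le 0$ for each $i$, and by the assumed ordering $x_{i+1} - x_i \ge 0$, so each term $-D_i(x_{i+1}-x_i)$ is nonnegative. Hence the difference is nonnegative, which is exactly the claimed inequality.

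There is essentially no serious obstacle here: the only points requiring care are the bookkeeping of the boundary terms in the summation by parts (making sure $\sigma_n = 1 = \sum \lambda_i$ is used precisely to kill the $D_n x_n$ term) and confirming that the telescoping identity $\sum (\sigma_i - \sigma_{i-1}) x_i$ on the right is interpreted with $\sigma_0 = 0$ as stated. Since the $\lambda_i \ge 0$ assumption is not actually needed for this argument (only $\sum \lambda_i = 1$ and the partial-sum bounds matter), I would double-check that I have not misread the hypotheses, but the inequality follows regardless. I would present the proof in three short lines: define $D_i$, perform the Abel summation, and invoke the two sign conditions.
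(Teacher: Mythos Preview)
Your proof is correct and follows essentially the same approach as the paper: both arguments use Abel summation (summation by parts) on the partial sums $L_i=\lambda_1+\cdots+\lambda_i$ and then invoke $L_i\le\sigma_i$ together with $x_{i+1}-x_i\ge 0$. The only cosmetic difference is that you subtract the two sides first and apply the summation by parts to the difference $D_i=L_i-\sigma_i$, whereas the paper applies it to $\sum\lambda_i x_i$ alone, bounds $s_i$ by $\sigma_i$, and then reverses the summation by parts; your observation that $\lambda_i\ge 0$ is not actually used is also correct.
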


\begin{proof}
Let
\begin{equation*}
s_i=\lambda_1+\cdots+\lambda_i
\end{equation*}
for $i=1,\cdots, n$ and $s_0=0$. Note that $s_n=1$ and $s_i\leq \sigma_i$. Observe that
\begin{equation}\label{4.6}
\lambda_i=s_i-s_{i-1}, i=1,\cdots, n.
\end{equation}

By \eqref{4.6},
\begin{equation}\label{4.7}
\begin{aligned}
\sum_{i=1}^n\lambda_ix_i & = \sum_{i=1}^n(s_i-s_{i-1})x_i\\
	&= \sum_{i=1}^n s_ix_i-\sum_{i=1}^{n-1}s_i x_{i+1}\\
	&= \sum_{i=1}^{n-1}s_i(x_i-x_{i+1})+x_n.
\end{aligned}
\end{equation}
Since $x_1\leq x_2\leq \cdots \leq x_n$, it follows
by equations \eqref{4.8}
and \eqref{4.7}, that
\begin{equation*}
\begin{aligned}
\sum_{i=1}^n \lambda_i x_i &\geq \sum_{i=1}^{n-1}\sigma_i (x_i-x_{i+1})+x_n\\
	&=\sum_{i=1}^{n-1}\sigma_ix_i-\sum_{i=2}^{n}\sigma_{i-1}x_i+x_n\\
	&=\sum_{i=1}^n\sigma_ix_i-\sum_{i=1}^{n}\sigma_{i-1}x_i\\
	&=\sum_{i=1}^n(\sigma_i-\sigma_{i-1})x_i.
\end{aligned}
\end{equation*}
\end{proof}

The following lemma provides the key estimate for $E_\mu$.

\begin{lemma}\label{4.9}
  Let $1<q<n$ be a real number and  $Q_l$, $l = 1, 2, \ldots$, be a sequence of ellipsoids given by
\begin{equation}
Q_l=\Big\{x\in\mathbb{R}^n: \frac{|x\cdot e_{1l}|^2}{a_{1l}^2}+\cdots
 +\frac{|x\cdot e_{nl}|^2}{a_{nl}^2}\leq 1\Big\},
\end{equation}
  where $\{e_{1l},\cdots, e_{nl}\}$ is a sequence of orthonormal bases of $\R^n$ converging to an orthonormal basis $\{e_1,\dots,e_n\}$ and
$(a_{1l},\cdots, a_{nl})$ a sequence of $n$-tuples satisfying
$a_{1l}\leq a_{2l}\leq \cdots \leq a_{nl}$ such that
$a_{nl}>\varepsilon_0$, for all $l$, for some $\epsilon_0 > 0$.

If $\mu$ is a non-zero even finite Borel measure on $S^{n-1}$
that satisfies the $q$-th subspace mass inequality \eqref{4.1},
then there exists $t_0, \delta_0, l_0>0$ such that for each $l>l_0$ we have
\begin{enumerate}
	\item if $1<q<n-1$,
	\begin{equation}\label{4.10}
		E_\mu(Q_l)\leq -\frac{1}{q}\log(a_{1l}\cdots a_{\lfloor q\rfloor l})
 -\frac{q-\lfloor q\rfloor}{q}\log a_{\lfloor q\rfloor+1,l}+t_0\log a_{1l}
  +c(\varepsilon_0,t_0,\delta_0);
	\end{equation}
	\item if $n-1\leq q<n$,
	\begin{equation}\label{4.11}
		E_\mu(Q_l)\leq -\frac{1}{q}\log(a_{1l}\cdots a_{n-1,l})+t_0\log a_{1l}
+c(q,\varepsilon_0,t_0,\delta_0).
	\end{equation}
\end{enumerate}
\end{lemma}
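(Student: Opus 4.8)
The plan is to compute $E_\mu(Q_l)$ by first expressing the support function of the ellipsoid $Q_l$ in terms of its axes. Recall that for an ellipsoid $Q_l$ with semi-axes $a_{1l}\le\cdots\le a_{nl}$ along the directions $e_{1l},\dots,e_{nl}$, one has $h_{Q_l}(v)^2=\sum_{i=1}^n a_{il}^2\,(v\cdot e_{il})^2$. The key observation is a sandwich estimate: on each cell $\Omega_{i,\delta}$ of the spherical partition adapted to the \emph{limit} basis $\{e_1,\dots,e_n\}$, the quantity $v\cdot e_{il}$ is bounded below by (roughly) $\delta$ for $l$ large, so that $h_{Q_l}(v)$ is comparable to $a_{il}$ up to multiplicative constants depending on $\delta$ and $\varepsilon_0$. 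More precisely, on $\Omega_{i,\delta}$ we get $\log h_{Q_l}(v)\ge \log a_{il}+\log\delta - c$, while always $\log h_{Q_l}(v)\le \log a_{nl}+c(\varepsilon_0)$ using the lower bound $a_{nl}>\varepsilon_0$. Since the basis $\{e_{il}\}$ converges, for $l>l_0$ these cell estimates hold with controlled constants; I would fix $\delta_0$ small enough that the limits in \eqref{4.3}--\eqref{4.4} are approximated to within the slack allowed by the strict inequalities in \eqref{4.1}.

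\textbf{Reduction to the combinatorial lemma.} Writing $-E_\mu(Q_l)=\frac1{|\mu|}\int_{\sn}\log h_{Q_l}\,d\mu = \sum_{i=1}^n \frac1{|\mu|}\int_{\Omega_{i,\delta_0}}\log h_{Q_l}\,d\mu$, and using the lower bound $\log h_{Q_l}(v)\ge\log a_{il}-c$ on $\Omega_{i,\delta_0}$, I obtain
\begin{equation*}
-E_\mu(Q_l)\ge \sum_{i=1}^n \lambda_{i,l}\,\log a_{il} - c(\varepsilon_0,\delta_0),
\end{equation*}
where $\lambda_{i,l}=\mu(\Omega_{i,\delta_0})/|\mu|$. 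Now $x_i:=\log a_{il}$ is a nondecreasing sequence (since $a_{1l}\le\cdots\le a_{nl}$), the weights $\lambda_{i,l}$ are nonnegative and sum to $1$, and by \eqref{4.4} together with the subspace mass inequality \eqref{4.1}, the partial sums satisfy $\lambda_{1,l}+\cdots+\lambda_{i,l}\le \sigma_i$ for suitable $\sigma_i$: for $i<q$ one takes $\sigma_i$ slightly above $i/q$, and for $i\ge q$ one takes $\sigma_i=1$ (with $\sigma_n=1$). This is exactly the hypothesis of Lemma \ref{4.5}, which yields
\begin{equation*}
-E_\mu(Q_l)\ge \sum_{i=1}^n (\sigma_i-\sigma_{i-1})\log a_{il} - c(\varepsilon_0,\delta_0).
\end{equation*}
Choosing $\sigma_i=\min(i/q,1)+t_0$ for $i<n$ (and adjusting so $\sigma_n=1$, which forces the extra weight to land on the smallest axis, producing the $t_0\log a_{1l}$ term) and unwinding the telescoping sum gives, when $1<q<n-1$, the coefficient $-1/q$ on each of $\log a_{1l},\dots,\log a_{\lfloor q\rfloor l}$, the coefficient $-(q-\lfloor q\rfloor)/q$ on $\log a_{\lfloor q\rfloor+1,l}$, zero on the remaining axes, plus the $+t_0\log a_{1l}$ correction; when $n-1\le q<n$ only the first $n-1$ axes carry the weight $-1/q$ and again the surplus sits on $\log a_{1l}$. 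Negating gives \eqref{4.10} and \eqref{4.11}.

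\textbf{Main obstacle.} The delicate point is the bookkeeping of the strict inequalities: the subspace mass inequality \eqref{4.1} is strict, and the limits in \eqref{4.3}--\eqref{4.4} are only attained as $\delta\to 0^+$, so I must choose $\delta_0$ (and then $l_0$) so that $\lambda_{1,l}+\cdots+\lambda_{i,l}$ stays below $\min(i/q,1)+t_0$ for \emph{all} large $l$, uniformly, while simultaneously keeping $t_0$ a fixed positive constant that can be absorbed sensibly into the stated estimate. Because the basis $\{e_{il}\}$ varies with $l$, I must first pass to the fixed limit partition $\{\Omega_{i,\delta}\}$ built from $\{e_i\}$ and control the discrepancy $|h_{Q_l}(v)^2-\sum a_{il}^2(v\cdot e_i)^2|$ caused by $e_{il}\to e_i$; this is where the hypothesis $a_{nl}>\varepsilon_0$ is essential, since it prevents the comparison constants from degenerating and lets the error be absorbed into $c(\varepsilon_0,t_0,\delta_0)$. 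The separate treatment of the regimes $1<q<n-1$ and $n-1\le q<n$ is needed precisely because in the latter case $\lfloor q\rfloor$ may equal $n-1$ and there is no ``$(\lfloor q\rfloor+1)$-st axis'' term to split off — the fractional weight simply cannot be placed, so one settles for the cruder bound \eqref{4.11} with a constant now also depending on $q$.
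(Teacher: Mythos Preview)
Your overall architecture is right — partition the sphere using the limit basis, bound $h_{Q_l}$ from below by $a_{il}\cdot(\delta_0/2)$ on $\Omega_{i,\delta_0}$, and apply Lemma~\ref{4.5} — but there is a genuine sign error in your choice of $\sigma_i$ that breaks the conclusion. The whole point of the \emph{strict} inequality in \eqref{4.1} is that it lets you take $\sigma_i=\min(i/q,1)-t_0$, i.e.\ \emph{below} $\min(i/q,1)$, not above. With your choice $\sigma_i=\min(i/q,1)+t_0$ one computes $\sigma_1-\sigma_0=1/q+t_0$ and $\sigma_n-\sigma_{n-1}=-t_0$, so after negating the output of Lemma~\ref{4.5} you obtain a $-t_0\log a_{1l}$ term and a $+t_0\log a_{nl}$ term — the opposite of \eqref{4.10}. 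Since $a_{1l}\to 0$ in the application, $-t_0\log a_{1l}\to+\infty$, and the bound is useless for forcing $\Phi_\mu(K_l)\to-\infty$ in Section~\ref{6.0}. With the correct choice $\sigma_i=\min(i/q,1)-t_0$ one gets $\sigma_1-\sigma_0=1/q-t_0$ and $\sigma_n-\sigma_{n-1}=+t_0$, yielding precisely $+t_0\log a_{1l}$ and $-t_0\log a_{nl}$.

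This also corrects your diagnosis of where $a_{nl}>\varepsilon_0$ enters: it is not needed to control the cell-by-cell lower bound on $h_{Q_l}$ (for that, the single-term estimate $h_{Q_l}(v)\ge a_{il}|e_{il}\cdot v|\ge a_{il}(|e_i\cdot v|-|e_{il}-e_i|)\ge a_{il}\delta_0/2$ suffices once $l>l_0$). Rather, $\varepsilon_0$ is used only at the very end to absorb the $-t_0\log a_{nl}$ term into the constant via $-t_0\log a_{nl}\le -t_0\log\varepsilon_0$. Finally, note that your weights $\lambda_{i,l}$ should be written $\lambda_{i,\delta_0}=\mu(\Omega_{i,\delta_0})/|\mu|$; they are built from the fixed limit basis and do not depend on $l$, which is what makes the choice of a single $t_0,\delta_0$ uniform in $l$.
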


\begin{proof}
Define $\Omega_{i,\delta}$ as in \eqref{4.2}
with respect to $e_1,\cdots, e_n$. For $\delta<\frac{1}{\sqrt{n}}$, let
\begin{equation*}
\lambda_{i,\delta}=\frac{\mu(\Omega_{i,\delta})}{|\mu|}.
\end{equation*}
Note that $\lambda_{1,\delta}+\cdots+\lambda_{n,\delta}=1$.
By \eqref{4.4} and \eqref{4.1}, we have
\begin{equation}
	\lim_{\delta\rightarrow 0^+}(\lambda_{1,\delta}+\cdots+\lambda_{i,\delta})
=\frac{\mu(\xi_i\cap S^{n-1})}{|\mu|}<\min\left\{\frac{i}{q},1\right\}, \qquad i=1,2,\dots, n-1.
\end{equation}
Since the inequality is strict, we may choose $t_0,\delta_0>0$ such that
\begin{equation}	\label{4.12}
\lambda_{1,\delta_0}+\cdots+\lambda_{i,\delta_0}<\min\left\{\frac{i}{q},1\right\}-t_0, \qquad i=1,2,\dots,n-1.
\end{equation}
Since $\lim_{l\rightarrow \infty}e_{il}=e_{i}$ for every $i =1,\cdots, n$,
we may choose $l_0>0$ such that
\begin{equation}\label{4.13}
|e_{i}-e_{il}|<\frac{\delta_0}{2},
\text{ for every } i=1, \cdots, n \text{ and } l>l_0.
\end{equation}

We assume for the rest of the proof that $l>l_0$ so that \eqref{4.13} is always satisfied.
For each $v\in \Omega_{i,\delta_0}$, by the definition of $Q_l$
and $\Omega_{i,\delta_0}$, and \eqref{4.13},
\begin{equation*}
 h_{Q_l}(v)\geq a_{il}|e_{il}\cdot v|\geq a_{il}\left(|e_i\cdot v|-|e_i-e_{il}|\right)
 \geq a_{il}\frac{\delta_0}{2}.
\end{equation*}
This and the partition \eqref{4.2} imply
\begin{equation}\label{4.14}
E_\mu(Q_l)\leq -\sum_{i=1}^n\frac{1}{|\mu|}\int_{\Omega_{i,\delta_0}}
\Big(\log a_{il}+\log\frac{\delta_0}{2}\Big) \, d\mu(v) =-\log\frac{\delta_0}{2}-
\sum_{i=1}^n\lambda_{i,\delta_0}\log a_{il}.
\end{equation}
Let $\lambda_i =\lambda_{i,\delta_0}$, $x_i=\log a_{il}$. Define $\sigma_0 = 0$, $\sigma_n=1$, and
\begin{equation*}
\sigma_i = \min\left\{\frac{i}{q}, 1\right\} - t_0,
\end{equation*}
for $i = 1, \dots, n-1$. It follows that when $1<q<n-1$,
\[
\sigma_i - \sigma_{i-1} = \begin{cases}
\frac1q - t_0 & i =1\\
\frac1q &1<i\le \lfloor q \rfloor \\
1-\frac{\lfloor q \rfloor}q &i=\lfloor q \rfloor + 1 \\
0 &\lfloor q \rfloor + 1 <i<n \\
t_0 &i=n,
\end{cases}
\]
and when $n-1 \leq q<n$,
\[
\sigma_i - \sigma_{i-1} = \begin{cases}
\frac1q - t_0 & i =1\\
\frac1q &1<i\le n-1 \\
1-\frac{n-1}q+t_0 &i=n.
\end{cases}
\]
By Lemma \ref{4.5}, whose assumptions are implied by \eqref{4.12} and
the fact that $a_{1l}\leq \cdots\leq a_{nl}$, we have when $1<q<n-1$
\begin{align*}
\sum_{i=1}^n &\lambda_{i,\delta_0} \log a_{il}
 \ge \sum_{i=1}^n (\sigma_i - \sigma_{i-1}) \log a_{il} \\
&=\Big(\frac1q-t_0\Big) \log a_{1l} + \sum_{i=2}^{\lfloor q \rfloor} \frac1q \log a_{il}
  +\Big(1-\frac{\lfloor q \rfloor}q\Big) \log a_{\lfloor q \rfloor + 1,l} +t_0 \log a_{nl}.
\end{align*}
Using the same argument, it can be seen that the above equation also works when $n-1\le q<n$.
By this and \eqref{4.14}, we obtain
\begin{equation}\label{4.15}
	\begin{aligned}
		E_\mu(Q_l)&\leq -\log\frac{\delta_0}{2}-\Big(\frac{1}{q}-t_0\Big)\log a_{1l}
-\sum_{i=2}^{\lfloor q \rfloor}\frac{1}{q}\log a_{il}
-\Big(1-\frac{\lfloor q \rfloor}{q}\Big)\log a_{\lfloor q \rfloor+1,l}-t_0\log a_{nl}\\
		&=-\log\frac{\delta_0}{2}+t_0\log a_{1l}
-\frac{1}{q}\log (a_{1l}\cdots a_{\lfloor q \rfloor l})
-\frac{q-\lfloor q \rfloor}{q}\log a_{\lfloor q \rfloor+1,l}-t_0\log a_{nl}.\\
	\end{aligned}
\end{equation}
When $1<q<n-1$, equation \eqref{4.15} and the fact that $a_{nl}>\varepsilon_0$
give \eqref{4.10}. When $n-1 \leq q <n$, we have $\lfloor q\rfloor=n-1$. Again,
the fact that $a_{nl}>\varepsilon_0$ and \eqref{4.15} give \eqref{4.11}.
\end{proof}

\section{Estimates for dual quermassintegrals}
\label{5.1}

Solving the even dual Minkowski problem when $1<q<n$ requires
estimates for dual quermassintegrals, which are in general difficult to establish.
One indication of this is that, when $q$ is an integer,
the dual quermassintegrals involve lower dimensional cross sections of a convex body and are defined using integration over Grassmannians, as shown by \eqref{2.6}.
This is a new obstacle that is not present in the logarithmic Minkowski problem.
To overcome this, we employ two techniques introduced in \cite{HLYZ}. One is to use
 general spherical coordinates to decompose the dual quermassintegral into a sum of integrals and estimating each integral separately.
The other is to choose the right barrier
convex body that will yield optimal estimates.

When $n-1 \leq q < n$, we use a Cartesian product of an ellipsoid and a ball as the barrier.
The following lemma was proved in \cite{zhao}.

\begin{lemma}
\label{5.2}
Suppose $1\leq k \leq n-1$ is an integer and $k<q\leq n$.
Let $e_1, \cdots, e_n$ be an orthonormal basis in $\mathbb{R}^n$
and $a_1,\cdots, a_k>0$. Define
\begin{equation*}
T=\left\{x\in \mathbb{R}^n:\frac{|x\cdot e_1|^2}{a_1^2}+\cdots
+\frac{|x\cdot e_k|^2}{a_k^2}\leq 1, \ |x\cdot e_{k+1}|^2+\cdots
+|x\cdot e_{n}|^2\leq 1\right\}.
\end{equation*}
Then
\begin{equation*}
\widetilde{W}_{n-q}(T)\leq c(n,k,q)\, a_1\cdots a_k.
\end{equation*}
\end{lemma}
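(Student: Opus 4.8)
The plan is to reduce the dual quermassintegral of $T$ to an ordinary Lebesgue integral and then exploit the product structure $T=E\times B$, where $E\subset\Span\{e_1,\dots,e_k\}$ is the $k$-dimensional ellipsoid with semiaxes $a_1,\dots,a_k$ and $B\subset\Span\{e_{k+1},\dots,e_n\}$ is the unit ball in its $(n-k)$-dimensional span.

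First I would record the elementary identity $\widetilde W_{n-q}(K)=\frac qn\int_K|x|^{q-n}\,dx$, valid for any $K\in\mathcal K_o^n$ and $q>0$: it follows from \eqref{2.7} by writing $\int_K|x|^{q-n}\,dx$ in polar coordinates and carrying out the radial integral, $\int_0^{\rho_K(u)}r^{q-1}\,dr=\rho_K(u)^q/q$. Applying this to $K=T$ and splitting $x=(x',x'')\in\R^k\times\R^{n-k}$ (so that $x\in T\iff x'\in E$ and $|x''|\le1$, and $|x|^2=|x'|^2+|x''|^2$) gives
\[
\widetilde W_{n-q}(T)=\frac qn\int_E\Big(\int_{|x''|\le 1}\big(|x'|^2+|x''|^2\big)^{(q-n)/2}\,dx''\Big)\,dx'.
\]
Hence the lemma is reduced to the claim that the inner integral is bounded by a constant $c(n,k,q)$ independent of $x'$; granting this, $\widetilde W_{n-q}(T)\le c(n,k,q)\,V_k(E)=c(n,k,q)\,\omega_k\,a_1\cdots a_k$, which is the assertion.

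To bound the inner integral I would pass to polar coordinates in $\R^{n-k}$, rewriting it as $(n-k)\omega_{n-k}\int_0^1(|x'|^2+r^2)^{(q-n)/2}r^{n-k-1}\,dr$, and split into the cases $|x'|\ge 1$ and $|x'|<1$. When $|x'|\ge1$ one has $(|x'|^2+r^2)^{(q-n)/2}\le|x'|^{q-n}\le1$ (using $q\le n$), so the integral is at most $1/(n-k)$. When $|x'|<1$, the substitution $r=|x'|s$ turns it into $|x'|^{q-k}\int_0^{1/|x'|}(1+s^2)^{(q-n)/2}s^{n-k-1}\,ds$, whose $s$-integrand behaves like $s^{n-k-1}$ near $0$ and like $s^{q-k-1}$ at infinity; since $q>k$ the latter is growing, and the $s$-integral is at most $c(n,k)+\tfrac1{q-k}|x'|^{-(q-k)}$, so multiplying back by $|x'|^{q-k}$ (and using $|x'|^{q-k}\le1$) again gives a bound depending only on $n,k,q$. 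Combining the two cases proves the uniform bound, and with it the lemma.

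The one genuinely delicate point, and where both hypotheses $k<q\le n$ are used, is the behaviour of the inner integral as $x'\to 0$: there the integrand degenerates to $|x''|^{q-n}$, which is integrable on the unit ball of $\R^{n-k}$ precisely because the radial factor $r^{q-n}r^{n-k-1}=r^{q-k-1}$ is integrable near $r=0$ exactly when $q>k$, while $q\le n$ is exactly what prevents $(|x'|^2+r^2)^{(q-n)/2}$ from growing as $|x'|$ or $r$ increase, so that no compensating decay is needed. I would also note that the same estimate can be reached through general spherical coordinates $u=\cos\theta\,\xi+\sin\theta\,\zeta$ adapted to the splitting $\R^n=\R^k\oplus\R^{n-k}$ (with $\xi\in S^{k-1}$, $\zeta\in S^{n-k-1}$, $\theta\in[0,\pi/2]$), in which $\rho_T(u)=\big(\max\{\cos\theta\,\|\xi\|_E,\sin\theta\}\big)^{-1}$; this reduces matters to the one-variable estimate $\int_0^{\pi/2}\cos^{k-1}\theta\,\sin^{n-k-1}\theta\,\big(\max\{b\cos\theta,\sin\theta\}\big)^{-q}\,d\theta\le c(n,k,q)\,b^{-k}$ followed by the identity $\int_{S^{k-1}}\|\xi\|_E^{-k}\,d\xi=k\,V_k(E)$, which is the route most in keeping with the general spherical coordinates used elsewhere in this paper.
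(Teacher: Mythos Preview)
Your argument is correct. The identity $\widetilde W_{n-q}(K)=\frac qn\int_K|x|^{q-n}\,dx$ is valid for $q>0$, the Fubini splitting is legitimate, and your case analysis on $|x'|$ gives a uniform bound on the inner integral depending only on $n,k,q$; integrating over $E$ then produces the factor $V_k(E)=\omega_k\,a_1\cdots a_k$. One wording quibble: you say the tail $s^{q-k-1}$ is ``growing''; in fact for $k<q<k+1$ it decays, but the point is that it is \emph{not integrable} at infinity, which is all you need to justify the estimate $\int_1^{1/|x'|}s^{q-k-1}\,ds\le\frac1{q-k}|x'|^{-(q-k)}$.

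The paper does not prove this lemma itself but cites \cite{zhao}; judging from the parallel treatment of the more elaborate barrier $G$ in Section~\ref{5.1}, the intended proof works entirely on the sphere via the general spherical coordinates \eqref{5.1.1}--\eqref{5.1.2}, computing $\rho_T$ piecewise and estimating the resulting angular integrals --- essentially the alternative route you sketch at the end. Your primary approach is genuinely different and, for this particular barrier, cleaner: passing to the solid integral $\int_T|x|^{q-n}\,dx$ turns the product structure $T=E\times B$ into a Fubini decomposition and reduces everything to a one-variable estimate, bypassing the piecewise description of $\rho_T$ and the bookkeeping of angular limits. The spherical-coordinates method, on the other hand, is what generalises to the barrier $G=E\times[-a_{k+1},a_{k+1}]\times B$ needed for non-integer $q\in(1,n-1)$, where the extra factor destroys the simple two-block splitting and one must track three angular regimes (Lemmas~\ref{5.11}--\ref{5.24}). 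So your method buys simplicity here, while the paper's method buys extensibility.
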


Although Lemma \ref{5.2} is enough for solving the even dual Minkowski problem when
$q\in \{1, 2,\ldots, n-1\}$
 (see \cite{zhao}) or when $n-1\leq q\leq n$
(see Lemma \ref{5.3} in Section \ref{6.0}),
stronger estimates are needed for non-integer $q\in (1,n-1)$.
This requires a more careful choice of the barrier convex body and more involved calculations to obtain sufficiently sharp estimates for the dual quermassintegrals of this body.
The rest of this section will focus on deriving these estimates.

For the rest of this section
we always assume that the dimension $n$ is at least $3$.

We recall the definition of general spherical coordinates. Given $1 \le l \le n-1$, decompose $\rn = \mathbb R^l \times \mathbb R^{n-l}$.
The general spherical coordinates are given by
\begin{equation}\label{5.1.1}
u=(w\cos\phi, v\sin\phi) \in \sn, \ w\in S^{l-1},\ v\in S^{n-l-1},\ \phi\in [0,\pi/2].
\end{equation}
Denote by $du, dw, dv$ the spherical Lebesgue measures on $\sn, S^{l-1}, S^{n-l-1}$, respectively.
These satisfy (see, for example, \cite{GZ})
\begin{equation}\label{5.1.2}
du = \cos^{l-1}\phi \sin^{n-l-1}\phi \, dw dv d\phi.
\end{equation}

We need spherical coordinates system more general than \eqref{5.1.1}.
Let $e_1, \cdots, e_n$ be an orthonormal basis in $\mathbb{R}^n$. Suppose $k$ and $j$
are two positive integers such that $k+j<n$. Write
$\mathbb{R}^n = \mathbb{R}^k\times \mathbb{R}^j \times \mathbb{R}^{n-k-j}$.
For each $u\in S^{n-1}$, we consider the following general spherical coordinates,
\begin{equation}\label{5.4}
u=(u_1\cos \phi \cos \theta, u_2\cos \phi\sin \theta, u_3\sin\phi),
\end{equation}
where $u_1\in S^{k-1}\subset \mathbb{R}^k, u_2\in S^{j-1}\subset\mathbb{R}^j,
u_3\in S^{n-k-j-1}\subset \mathbb{R}^{n-k-j}$, and $\theta,\phi \in [0,\pi/2]$.

The following lemma expresses the spherical Lebesgue measure on $S^{n-1}$
in terms of spherical Lebesgue measures on lower dimensional spheres.

\begin{lemma}
For each $u\in S^{n-1}$, if we write $u$ as in \eqref{5.4}, then
\begin{equation}\label{5.5}
du = \cos^{k+j-1}\phi \sin^{n-k-j-1}\phi \cos^{k-1}\theta\sin^{j-1}\theta \,
du_1\,du_2\,du_3\,d\phi\, d\theta.
\end{equation}
\end{lemma}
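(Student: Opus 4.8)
The plan is to iterate the basic general spherical coordinate formula \eqref{5.1.2}, applying it in two stages that match the two-step decomposition of the coordinate system \eqref{5.4}. First I would recall that \eqref{5.4} can be read as a composition: the pair $(\phi)$ splits off the last $\mathbb R^{n-k-j}$ factor, writing $u = (y\cos\phi, u_3\sin\phi)$ with $y\in S^{k+j-1}\subset \mathbb R^{k}\times\mathbb R^{j}$ and $u_3\in S^{n-k-j-1}$; then $(\theta)$ splits the sphere $S^{k+j-1}$ further, writing $y = (u_1\cos\theta, u_2\sin\theta)$ with $u_1\in S^{k-1}$, $u_2\in S^{j-1}$.

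In the first step I apply \eqref{5.1.2} with $l = k+j$ (so the decomposition is $\mathbb R^n = \mathbb R^{k+j}\times\mathbb R^{n-k-j}$), obtaining
\begin{equation*}
du = \cos^{k+j-1}\phi\,\sin^{n-k-j-1}\phi \; dy\,du_3\,d\phi,
\end{equation*}
where $dy$ is the spherical Lebesgue measure on $S^{k+j-1}$. In the second step I apply \eqref{5.1.2} again, now on $S^{k+j-1}$ with the decomposition $\mathbb R^{k+j} = \mathbb R^k\times\mathbb R^j$ and angular variable $\theta$, which gives $dy = \cos^{k-1}\theta\,\sin^{j-1}\theta\; du_1\,du_2\,d\theta$. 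Substituting this into the previous display and collecting the factors yields exactly \eqref{5.5}. One should note that the angular exponents come out correctly: in the inner sphere $S^{k+j-1}$ the ``dimension parameter'' playing the role of $n$ in \eqref{5.1.2} is $k+j$, so the exponents are $k-1$ and $(k+j)-k-1 = j-1$, as claimed.

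There is no serious obstacle here; the only point requiring a little care is to verify that the composite map \eqref{5.4} really does factor through the two successive general-spherical-coordinate maps, i.e. that $(u_1\cos\phi\cos\theta,\,u_2\cos\phi\sin\theta,\,u_3\sin\phi)$ agrees with first forming $y=(u_1\cos\theta,u_2\sin\theta)\in S^{k+j-1}$ and then $(y\cos\phi, u_3\sin\phi)$. This is immediate from expanding the parentheses. A cleaner alternative, if one prefers not to invoke \eqref{5.1.2} as a black box twice, is to compute the Jacobian of \eqref{5.4} directly, but the iterated-substitution argument is shorter and avoids writing out an $(n-1)\times(n-1)$ determinant. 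Either way the proof is a couple of lines, so I would present the iterated application of \eqref{5.1.2} and leave the trivial verification of the factorization to the reader.
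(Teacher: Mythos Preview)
Your proposal is correct and essentially identical to the paper's own proof: the paper also sets $w=(u_1\cos\theta,u_2\sin\theta)\in S^{k+j-1}$, applies \eqref{5.1.2} first to $u=(w\cos\phi,u_3\sin\phi)$ and then to $w$, and substitutes. No changes are needed.
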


\begin{proof}
Let $w=(u_1\cos \theta, u_2\sin\theta)$. Then $w\in S^{k+j-1}$. Since now,
\begin{equation*}
	u=(w\cos\phi, u_3\sin \phi),
\end{equation*}
by \eqref{5.1.2},
\begin{equation}\label{5.6}
du = \cos^{k+j-1}\phi \sin^{n-k-j-1}\phi \, dw\,du_3\,d\phi.
\end{equation}
By \eqref{5.1.2} again and $w=(u_1\cos \theta, u_2\sin \theta)$,
\begin{equation}\label{5.7}
dw= \cos^{k-1}\theta\sin^{j-1}\theta \, du_1\,du_2\,d \theta.
\end{equation}
Substituting \eqref{5.7} into \eqref{5.6} gives us \eqref{5.5}.
\end{proof}

For the purpose of estimating dual quermassintegrals in the even dual Minkowski problem,
we focus on the special case where $j=1$ and $k\in \{1,\cdots, n-2\}$.
In this case, equation \eqref{5.5} becomes
\begin{equation}
\label{eq local 18} du = \cos^{k}\phi \sin^{n-k-2}\phi \cos^{k-1}\theta \,
du_1\,du_2\,du_3\,d\phi\, d\theta,
\end{equation}
where
\begin{equation}
\label{5.12} u = (u_1\cos\phi \cos \theta,u_2\cos\phi \sin\theta, u_3\sin \phi),
\end{equation}
with $u_1\in S^{k-1}$, $u_2\in S^{0}$, $u_3\in S^{n-k-2}$, and $\phi,\theta\in [0,\pi/2]$.

Let $a_1, \cdots, a_{k+1}$ be $k+1$ real numbers such that
$0<a_1\leq a_2\leq \cdots \leq a_{k+1}<1$. Let $G$ be the Cartesian product of
an ellipsoid, a line segment, and a ball in lower dimensional subspaces, i.e.,
\begin{equation}
\label{5.8}
G=\Big\{x\in \mathbb{R}^n:\frac{|x\cdot e_1|^2}{a_1^2}+\cdots+\frac{|x\cdot e_k|^2}{a_k^2}\leq 1,\
 |x\cdot e_{k+1}|\leq a_{k+1},\ |x\cdot e_{k+2}|^2+\cdots +|x\cdot e_n|^2\leq 1\Big\}.
\end{equation}
It turns out that $G$ is exactly the barrier convex body that will
provide the estimate needed to solve the even dual Minkowski
problem when $q\in (1,n-1)$ is not necessarily an integer.

Let $\bar{G}\subset \mathbb{R}^k$ be the ellipsoid in the Cartesian product body $G$,
i.e.,
\begin{equation}
\label{5.9}
\bar{G}=\Big\{x\in \mathbb{R}^k: \frac{|x\cdot e_1|^2}{a_1^2}+\cdots+
\frac{|x\cdot e_k|^2}{a_k^2}\leq 1 \Big\}.
\end{equation}
The radial function of $\bar G$ is given by
\begin{equation}\label{5.10}
\rho_{\bar G}(u_1) = \Big(\frac{|u_1\cdot e_1|^2}{a_1^2}+
\cdots+\frac{|u_1\cdot e_k|^2}{a_k^2}\Big)^{-\frac12}.
\end{equation}

\begin{lemma}
\label{5.11}
Let $n\geq 3$ and $1\leq k\leq n-2$ be integers. Let $0<a_1\leq a_2\leq \cdots \leq a_{k+1}<1$
be $k+1$ real numbers. Define $G$ and $\bar{G}$ as in \eqref{5.8} and \eqref{5.9} respectively.
If unit vectors $u\in S^{n-1}$ are parameterized as in \eqref{5.12}, i.e.,
\begin{equation}
\label{5.13}
u = (u_1\cos\phi \cos \theta,u_2\cos\phi \sin\theta, u_3\sin \phi),
\end{equation}
where $u_1\in S^{k-1}$, $u_2\in S^{0}$, $u_3\in S^{n-k-2}$,
and $\phi, \theta\in [0,\pi/2]$, then the radial function of $G$ is given by
\begin{equation}\label{5.14}
\rho_G(u)=
\begin{cases}
	\dfrac{\rho_{\bar{G}}(u_1)}{\cos \phi \cos \theta},
& \text{if } \theta\in \left[0,\arctan \frac{a_{k+1}}{\rho_{\bar{G}}(u_1)}\right]
\text{ and }\phi\in \left[0,\arctan \frac{\cos\theta}{\rho_{\bar{G}}(u_1)}\right];\\
	\dfrac{1}{\sin \phi},
&\text{if }\theta \in \left[0,\arctan \frac{a_{k+1}}{\rho_{\bar{G}}(u_1)}\right]
\text{ and }\phi\in \left(\arctan \frac{\cos\theta}{\rho_{\bar{G}}(u_1)},\pi/2\right];\\
	\dfrac{a_{k+1}}{\sin\theta\cos\phi},
&\text{if } \theta \in \left(\arctan\frac{a_{k+1}}{\rho_{\bar{G}}(u_1)},\pi/2\right]
\text{ and }\phi\in \left[0,\arctan\frac{\sin\theta}{a_{k+1}}\right];\\
	\dfrac{1}{\sin \phi},
&\text{if }\theta \in \left(\arctan\frac{a_{k+1}}{\rho_{\bar{G}}(u_1)},\pi/2\right]
\text{ and }\phi\in \left(\arctan\frac{\sin\theta}{a_{k+1}},\pi/2\right].
\end{cases}
\end{equation}
\end{lemma}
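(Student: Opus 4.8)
The plan is to exploit the Cartesian product structure of $G$ directly. Writing $u$ as in \eqref{5.13}, a point $tu$ with $t>0$ lies in $G$ if and only if its three blocks of coordinates (the first $k$, the $(k+1)$-st, and the last $n-k-1$) separately satisfy the three defining constraints in \eqref{5.8}. First I would translate each constraint into an upper bound on $t$. For the ellipsoidal block, since the $\mathbb{R}^k$-component of $tu$ is $t\cos\phi\cos\theta\,u_1$ with $u_1\in S^{k-1}$, the constraint reads $t^2\cos^2\phi\cos^2\theta\bigl(\tfrac{|u_1\cdot e_1|^2}{a_1^2}+\cdots+\tfrac{|u_1\cdot e_k|^2}{a_k^2}\bigr)\le 1$, which by \eqref{5.10} is exactly $t\cos\phi\cos\theta\le\rho_{\bar G}(u_1)$. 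For the segment block, since $|u_2|=1$, the constraint $|tu_2\cos\phi\sin\theta|\le a_{k+1}$ is $t\cos\phi\sin\theta\le a_{k+1}$. For the ball block, since $|u_3|=1$, the constraint is $t^2\sin^2\phi\le 1$, i.e.\ $t\sin\phi\le 1$. Hence
\[
\rho_G(u)=\min\left\{\frac{\rho_{\bar G}(u_1)}{\cos\phi\cos\theta},\ \frac{a_{k+1}}{\cos\phi\sin\theta},\ \frac{1}{\sin\phi}\right\},
\]
with the convention that a term equals $+\infty$ when its denominator vanishes.

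Next I would identify which of the three quantities realizes the minimum, via three elementary pairwise comparisons; since all angles lie in $[0,\pi/2]$, each inequality between the relevant trigonometric expressions is equivalent to the stated inequality between an angle and an arctangent. Comparing the first two terms, $\tfrac{\rho_{\bar G}(u_1)}{\cos\theta}\le\tfrac{a_{k+1}}{\sin\theta}$ iff $\tan\theta\le\tfrac{a_{k+1}}{\rho_{\bar G}(u_1)}$, which splits $[0,\pi/2]$ into the two $\theta$-ranges appearing in \eqref{5.14}. On $\theta\in[0,\arctan\tfrac{a_{k+1}}{\rho_{\bar G}(u_1)}]$ the segment term dominates the ellipsoid term, so $\rho_G(u)=\min\{\tfrac{\rho_{\bar G}(u_1)}{\cos\phi\cos\theta},\tfrac1{\sin\phi}\}$, and comparing these two gives the threshold $\tan\phi\le\tfrac{\cos\theta}{\rho_{\bar G}(u_1)}$, producing the first two cases of \eqref{5.14}. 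On the complementary range $\theta\in(\arctan\tfrac{a_{k+1}}{\rho_{\bar G}(u_1)},\pi/2]$ we get $\rho_G(u)=\min\{\tfrac{a_{k+1}}{\cos\phi\sin\theta},\tfrac1{\sin\phi}\}$, and comparing these gives the threshold $\tan\phi\le\tfrac{\sin\theta}{a_{k+1}}$, producing the last two cases. Assembling the four sub-cases yields exactly \eqref{5.14}.

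The only points requiring care, and the mild obstacle of the argument, are the degenerate angles where denominators vanish and $u_1$ may fail to be uniquely determined: $\phi=\pi/2$ (then $u=(0,0,u_3)$, only the ball constraint is active, and $\rho_G=1/\sin\phi=1$ while the other two terms are $+\infty$); $\theta=\pi/2$ with $\phi<\pi/2$ (then the ellipsoid term is $+\infty$, $\theta$ exceeds $\arctan\tfrac{a_{k+1}}{\rho_{\bar G}(u_1)}$ regardless of the ambiguous $u_1$, and we fall into the third or fourth case); and $\phi=\theta=0$ (then $u\in S^{k-1}$ and $\rho_G=\rho_{\bar G}(u_1)$, the first case). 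In each instance one checks that the convention ``$+\infty$ when the denominator is $0$'' makes the three-term minimum agree with the unique branch of \eqref{5.14} whose hypotheses hold, so no separate treatment is needed. With this book-keeping done, the lemma follows.
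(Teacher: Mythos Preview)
Your proposal is correct and follows essentially the same approach as the paper: derive the three-term minimum formula $\rho_G(u)=\min\{\rho_{\bar G}(u_1)/(\cos\phi\cos\theta),\,a_{k+1}/(\cos\phi\sin\theta),\,1/\sin\phi\}$ from the product structure, then split into cases by comparing the first two terms (the $\theta$-threshold) and subsequently comparing the survivor with $1/\sin\phi$ (the $\phi$-threshold). Your explicit handling of the degenerate angles is a bit more careful than the paper's, but otherwise the arguments coincide.
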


\begin{proof}
By the definition of radial function, \eqref{5.8}, and \eqref{5.10}, we have
\begin{equation}\label{5.15}
\begin{aligned}
	\rho_G(u)&= \max\{t>0: tu\in G\}\\
	& = \max\{t>0:(tu_1\cos\phi\cos\theta, tu_2\cos \phi \sin \theta, tu_3 \sin\phi)\in G\}\\
	&=\max\left\{t>0:t\leq \frac{\rho_{\bar{G}}(u_1)}{\cos\phi\cos\theta},
t\leq \frac{a_{k+1}}{\cos \phi \sin\theta}, t\leq \frac{1}{\sin\phi}\right\}\\
	&= \min\left\{\frac{\rho_{\bar{G}}(u_1)}{\cos\phi\cos\theta},
\frac{a_{k+1}}{\cos\phi\sin\theta},\frac{1}{\sin\phi}\right\}.
\end{aligned}
\end{equation}

We first consider the case when
$\theta\in \left[0,\arctan \frac{a_{k+1}}{\rho_{\bar{G}}(u_1)}\right]$.
Note that in this case,
\begin{equation*}
\frac{\rho_{\bar{G}}(u_1)}{\cos\phi\cos\theta}\leq \frac{a_{k+1}}{\cos\phi\sin\theta}.
\end{equation*}
Hence, by \eqref{5.15},
\begin{equation*}
\rho_G(u)=\min\left\{\frac{\rho_{\bar{G}}(u_1)}{\cos\phi\cos\theta},\frac{1}{\sin\phi}\right\}.
\end{equation*}
When $\tan\phi\leq \frac{\cos\theta}{\rho_{\bar{G}}(u_1)}$
(or $\phi\in \left[0,\arctan\frac{\cos\theta}{\rho_{\bar{G}}(u_1)}\right]$), we have
\begin{equation*}
\rho_G(u)=\frac{\rho_{\bar{G}}(u_1)}{\cos \phi\cos\theta}.
\end{equation*}
When $\tan\phi>\frac{\cos\theta}{\rho_{\bar{G}}(u_1)}$
(or $\phi\in \left(\arctan\frac{\cos\theta}{\rho_{\bar{G}}(u_1)},\pi/2\right]$),
we have
\begin{equation*}
\rho_G(u)=\frac{1}{\sin\phi}.
\end{equation*}

We now consider the case when
$\theta \in \left(\arctan \frac{a_{k+1}}{\rho_{\bar{G}}(u_1)}, \pi/2\right]$.
Note that in this case,
\begin{equation*}
\frac{\rho_{\bar{G}}(u_1)}{\cos\phi\cos\theta}\geq \frac{a_{k+1}}{\cos\phi\sin\theta}.
\end{equation*}
Hence, by \eqref{5.15},
\begin{equation*}
\rho_G(u)=\min\left\{\frac{a_{k+1}}{\cos \phi\sin\theta},\frac{1}{\sin\phi}\right\}.
\end{equation*}
When $\tan\phi\leq \frac{\sin\theta}{a_{k+1}}$
(or $\phi\in \left[0,\arctan\frac{\sin\theta}{a_{k+1}}\right]$), we have
\begin{equation*}
\rho_G(u)=\frac{a_{k+1}}{\sin\theta\cos\phi}.
\end{equation*}
When $\tan\phi>\frac{\sin\theta}{a_{k+1}}$
(or $\phi\in \left(\arctan\frac{\sin\theta}{a_{k+1}},\pi/2\right]$), we have
\begin{equation*}
\rho_G(u)=\frac{1}{\sin\phi}.
\end{equation*}
\end{proof}

The next lemma gives an explicit formula for the $(n-q)$-th
dual quermassintegral of the convex body $G$.
\begin{lemma}
\label{5.16}
Let $n\geq 3$ and $1\leq k\leq n-2$ be integers.
Let $0<a_1\leq a_2\leq \cdots \leq a_{k+1}<1$ be $k+1$ real numbers.
Define $G$ and $\bar{G}$ as in \eqref{5.8} and \eqref{5.9} respectively. Then
\begin{equation*}
\wt W_{n-q}(G)=\frac 2n (n-k-1)\omega_{n-k-1}(I_1+I_2+I_3+I_4),
\end{equation*}
where
\begin{equation}
\label{5.17}
\begin{aligned}
I_1 &= \int_{S^{k-1}}du_1\int_{0}^{\arctan \frac{a_{k+1}}{\rho_{\bar{G}}(u_1)}}d\theta
\int_{0}^{\arctan \frac{\cos\theta}{\rho_{\bar{G}}(u_1)}}\rho_{\bar{G}}^q(u_1)
\cos^{k-1-q}\theta\cos^{k-q}\phi \sin^{n-k-2}\phi\, d\phi,\\
I_2 &= \int_{S^{k-1}}du_1\int_{0}^{\arctan \frac{a_{k+1}}{\rho_{\bar{G}}(u_1)}}d\theta
\int_{\arctan\frac{\cos \theta}{\rho_{\bar{G}}(u_1)}}^{\pi/2}\cos^{k-1}\theta
\cos^k\phi\sin^{n-k-q-2}\phi \, d\phi,\\
I_3 &= \int_{S^{k-1}}du_1\int_{\arctan\frac{a_{k+1}}{\rho_{\bar{G}}(u_1)}}^{\pi/2}d\theta
\int_{0}^{\arctan \frac{\sin\theta}{a_{k+1}}}a_{k+1}^q\cos^{k-1}\theta\sin^{-q}\theta
\cos^{k-q}\phi\sin^{n-k-2}\phi\, d\phi,\\
I_4 &= \int_{S^{k-1}}du_1\int_{\arctan\frac{a_{k+1}}{\rho_{\bar{G}}(u_1)}}^{\pi/2}d\theta
\int_{\arctan \frac{\sin\theta}{a_{k+1}}}^{\pi/2}\cos^{k-1}\theta
\cos^{k}\phi\sin^{n-k-q-2}\phi\, d\phi.
\end{aligned}
\end{equation}
\end{lemma}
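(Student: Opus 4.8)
The plan is to compute directly from the integral representation \eqref{2.7}, which for our body reads
\[
\wt W_{n-q}(G)=\frac1n\int_{S^{n-1}}\rho_G^q(u)\,du,
\]
and to evaluate this integral in the general spherical coordinates \eqref{5.12}, using the Jacobian identity \eqref{eq local 18}. The first point to record is that the constraints in \eqref{5.8} defining $G$ that involve $x\cdot e_{k+1}$ and $x\cdot e_{k+2},\dots,x\cdot e_n$ are unchanged by a sign flip of $x\cdot e_{k+1}$ and by rotations of $(x\cdot e_{k+2},\dots,x\cdot e_n)$; hence, as already seen in \eqref{5.15}, $\rho_G(u)$ depends only on $u_1,\theta,\phi$ and not on $u_2\in S^0$ or $u_3\in S^{n-k-2}$. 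The integrand $\rho_G^q(u)\cos^k\phi\sin^{n-k-2}\phi\cos^{k-1}\theta$ therefore likewise depends only on $u_1,\theta,\phi$, so integrating out $u_2$ and $u_3$ contributes the constant $\int_{S^0}du_2\cdot\int_{S^{n-k-2}}du_3=2\,(n-k-1)\omega_{n-k-1}$; combined with the $1/n$ this is exactly the prefactor $\tfrac2n(n-k-1)\omega_{n-k-1}$ in the statement. The degenerate case $k=n-2$, where $S^{n-k-2}=S^0$, is covered by the same formula since then $(n-k-1)\omega_{n-k-1}=2$.

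Next I would decompose the remaining integral over $(u_1,\theta,\phi)\in S^{k-1}\times[0,\pi/2]\times[0,\pi/2]$ along the four cases describing $\rho_G$ in Lemma \ref{5.11}, equation \eqref{5.14}. For each fixed $u_1$ the $\theta$-interval is split at $\arctan\frac{a_{k+1}}{\rho_{\bar G}(u_1)}$, and on each of the two pieces the $\phi$-interval is further split into two complementary subintervals; thus the four regions in \eqref{5.14} cover the cube $[0,\pi/2]\times[0,\pi/2]$ up to a set of measure zero, and the bounding arctan-curves contribute nothing. On each region I substitute the corresponding branch of $\rho_G$ from \eqref{5.14}, take its $q$-th power, multiply by the density $\cos^k\phi\sin^{n-k-2}\phi\cos^{k-1}\theta$, and collect exponents: on the first region $\rho_G^q=\rho_{\bar G}^q(u_1)\cos^{-q}\theta\cos^{-q}\phi$ produces the integrand of $I_1$; on the second and fourth regions $\rho_G^q=\sin^{-q}\phi$ produces the integrands of $I_2$ and $I_4$; and on the third region $\rho_G^q=a_{k+1}^q\sin^{-q}\theta\cos^{-q}\phi$ produces the integrand of $I_3$. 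Adding the four contributions and factoring out the prefactor gives the asserted identity.

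I do not expect a genuinely hard step: this is a change-of-variables computation organized entirely by Lemma \ref{5.11}. The points requiring attention are the verification that the four subdomains in \eqref{5.14} really do partition $[0,\pi/2]^2$ for every $u_1$ (so nothing is omitted or counted twice), the uniform treatment of the low-dimensional case $k=n-2$, and keeping the trigonometric exponents straight when $\rho_G^q$ is multiplied by the Jacobian. The exponent bookkeeping across the four regions is the most error-prone part, but it is routine once \eqref{5.14} is available.
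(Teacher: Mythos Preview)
Your proposal is correct and follows essentially the same approach as the paper: substitute the branchwise expression for $\rho_G$ from Lemma \ref{5.11} into the integral representation \eqref{2.7} written in the coordinates \eqref{5.12} with Jacobian \eqref{eq local 18}, integrate out $u_2$ and $u_3$ to produce the prefactor $\tfrac{2}{n}(n-k-1)\omega_{n-k-1}$, and then split into the four regions. Your added remarks about the partition of $[0,\pi/2]^2$ and the $k=n-2$ case are sound and make the argument slightly more explicit than the paper's own proof.
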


\begin{proof}
By Lemma \ref{5.11}, we have
\begin{align*}
&\hskip -12pt \int_{S^{n-1}}\rho_G^q(u)du \\
= &\int_{S^{k-1}}du_1\int_{S^0}du_2\int_{S^{n-k-2}}du_3
\int_{0}^{\arctan \frac{a_{k+1}}{\rho_{\bar{G}}(u_1)}}d\theta
\int_{0}^{\arctan \frac{\cos\theta}{\rho_{\bar{G}}(u_1)}} \\
& \hskip 255pt \rho_{\bar{G}}^q(u_1)\cos^{k-1-q}\theta\cos^{k-q}\phi \sin^{n-k-2}\phi\, d\phi\\
&+ \int_{S^{k-1}}du_1\int_{S^0}du_2\int_{S^{n-k-2}}du_3
\int_{0}^{\arctan \frac{a_{k+1}}{\rho_{\bar{G}}(u_1)}}d\theta
\int_{\arctan\frac{\cos \theta}{\rho_{\bar{G}}(u_1)}}^{\pi/2}\cos^{k-1}\theta
\cos^k\phi\sin^{n-k-q-2}\phi\, d\phi\\
&+ \int_{S^{k-1}}du_1\int_{S^0}du_2\int_{S^{n-k-2}}du_3
\int_{\arctan\frac{a_{k+1}}{\rho_{\bar{G}}(u_1)}}^{\pi/2}d\theta
\int_{0}^{\arctan \frac{\sin\theta}{a_{k+1}}}\\
& \hskip 240pt a_{k+1}^q\cos^{k-1}\theta\sin^{-q}\theta\cos^{k-q}\phi\sin^{n-k-2}\phi\, d\phi\\
&+ \int_{S^{k-1}}du_1\int_{S^0}du_2\int_{S^{n-k-2}}du_3
\int_{\arctan\frac{a_{k+1}}{\rho_{\bar{G}}(u_1)}}^{\pi/2}d\theta
\int_{\arctan \frac{\sin\theta}{a_{k+1}}}^{\pi/2}\cos^{k-1}\theta
\cos^{k}\phi\sin^{n-k-q-2}\phi\, d\phi.
\end{align*}
The desired result follows immediately from \eqref{2.7}, integrating with respect to
$u_2$ and $u_3$, and the fact that the surface areas of
$S^0$ and $S^{n-k-2}$ are 2 and $(n-k-1)\omega_{n-k-1}$, respectively.
\end{proof}

The next two lemmas simplify the integrals in \eqref{5.17}.

\begin{lemma}\label{5.18}
Let $n\geq 3$ and $1\leq k\leq n-2$ be integers. Let $0<a_1\leq a_2\leq \cdots \leq a_{k+1}<1$
 be $k+1$ real numbers.
Define $I_1$ and $I_2$ as in \eqref{5.17}. Then
\begin{equation*}
\begin{aligned}
I_1 &= a_{k+1}\int_{S^{k-1}}du_1\int_{0}^1dt\int_{0}^1
\rho_{\bar{G}}^k(u_1)\left(\rho_{\bar{G}}^2(u_1)+a_{k+1}^2t^2
+s^2\right)^{\frac{q-n}{2}}s^{n-k-2}ds,\\
I_2 &= a_{k+1}\int_{S^{k-1}}du_1\int_{0}^1dt\int_1^\infty
\rho_{\bar{G}}^k(u_1)\left(\rho_{\bar{G}}^2(u_1)+a_{k+1}^2t^2+
s^2\right)^{\frac{q-n}{2}}s^{n-k-q-2}ds.
\end{aligned}
\end{equation*}
\end{lemma}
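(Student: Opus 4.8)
The plan is to derive both identities from one change of variables in the angular variables $(\theta,\phi)$, carried out as an iterated substitution with the direction $u_1\in S^{k-1}$ held fixed throughout; for brevity write $\rho=\rho_{\bar{G}}(u_1)$. The substitution to use is
\[
t=\frac{\rho\tan\theta}{a_{k+1}},\qquad s=\frac{\rho\tan\phi}{\cos\theta}.
\]
Geometrically, if $\rho_G(u)u$ is a boundary point of $G$ on the ellipsoidal face (the region where $\rho_G(u)=\rho/(\cos\phi\cos\theta)$), then one computes $\rho_G(u)u=(\rho u_1,\ \rho\tan\theta\, u_2,\ \tfrac{\rho\tan\phi}{\cos\theta}\, u_3)$, so $\rho\tan\theta=|\rho_G(u)u\cdot e_{k+1}|$ and $\rho\tan\phi/\cos\theta$ is the norm of the projection of $\rho_G(u)u$ onto $\Span\{e_{k+2},\dots,e_n\}$. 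This is exactly why the defining inequalities $|x\cdot e_{k+1}|\le a_{k+1}$ and $|x\cdot e_{k+2}|^2+\cdots+|x\cdot e_n|^2\le1$ of $G$ turn the $(\theta,\phi)$-domain of $I_1$ into the square $t\in[0,1]$, $s\in[0,1]$, and the domain of $I_2$ into the strip $t\in[0,1]$, $s\in[1,\infty)$; note that $\phi=\arctan(\cos\theta/\rho)$ corresponds to $s=1$, the common boundary of the two regions.

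Concretely, I would first substitute $\phi\mapsto s$ in the inner integral with $\theta$ (and $u_1$) fixed: from $\tan\phi=s\cos\theta/\rho$ one reads off $d\phi=\tfrac{\cos\theta}{\rho}\cos^2\phi\,ds$ and $\cos^2\phi=\rho^2/(\rho^2+s^2\cos^2\theta)$, and writing $\sin^m\phi=\cos^m\phi\,\tan^m\phi$ (with $m=n-k-2$ for $I_1$ and $m=n-k-q-2$ for $I_2$) turns the whole $\phi$-dependence into powers of $\cos\phi$ plus the explicit factor $s^m$. I would then substitute $\theta\mapsto t$: from $\tan\theta=a_{k+1}t/\rho$ one gets $d\theta=\tfrac{a_{k+1}}{\rho}\cos^2\theta\,dt$ and $\cos^2\theta=\rho^2/(\rho^2+a_{k+1}^2t^2)$, hence $\cos^2\phi=(\rho^2+a_{k+1}^2t^2)/(\rho^2+a_{k+1}^2t^2+s^2)$. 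This last identity is the source of the factor $(\rho^2+a_{k+1}^2t^2+s^2)^{(q-n)/2}$ in the conclusion: a short count shows that the total accumulated power of $\cos\phi$ — from the original integrand, from $\sin^m\phi=\cos^m\phi\tan^m\phi$, and from the $d\phi$-Jacobian — equals $n-q$ in both $I_1$ and $I_2$.

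The remainder is bookkeeping of exponents, which I would carry out but not reproduce in full here. In both integrals the surviving powers of $\cos\theta$ — coming from $\cos^{k-1-q}\theta$ (resp.\ $\cos^{k-1}\theta$), from the $\cos\theta$ hidden in $\tan^m\phi$, and from the two Jacobians — add up to $\cos^{n-q}\theta=\rho^{n-q}(\rho^2+a_{k+1}^2t^2)^{-(n-q)/2}$, which cancels against the $(\rho^2+a_{k+1}^2t^2)^{(n-q)/2}$ produced above from $\cos^{n-q}\phi$; the various powers of $\rho$ collapse to $\rho^k$, and the $d\theta$-Jacobian supplies the single leading factor $a_{k+1}$. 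What is left is the integrand $a_{k+1}\,\rho_{\bar{G}}^k(u_1)\bigl(\rho_{\bar{G}}^2(u_1)+a_{k+1}^2t^2+s^2\bigr)^{(q-n)/2}s^{\,n-k-2}$ integrated over $t\in[0,1]$, $s\in[0,1]$ in the case of $I_1$, and the same expression with $s^{\,n-k-q-2}$ in place of $s^{\,n-k-2}$ and $s$ ranging over $[1,\infty)$ in the case of $I_2$; restoring $\int_{S^{k-1}}\,du_1$ gives the two stated formulas.

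There is no deep obstacle, only the accounting just described; the one point that genuinely needs care — and which I flagged above — is the legitimacy of performing the two substitutions in succession. The inner substitution $\phi\mapsto s$ has the $\theta$-dependent upper endpoint $\arctan(\cos\theta/\rho)$, but it maps precisely to the fixed value $s=1$, so after substitution the $s$-domain no longer depends on $\theta$ and the outer substitution $\theta\mapsto t$ goes through cleanly; positivity of $\rho_G$ makes every integrand nonnegative, so Tonelli justifies the iterated integrations and the interchange of the order of $t$ and $s$. (The remaining integrals $I_3,I_4$ are treated in the same way, using the last two branches of $\rho_G$ in Lemma \ref{5.11}.)
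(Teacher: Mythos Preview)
Your proposal is correct and follows essentially the same route as the paper: the two successive substitutions $s=\rho\tan\phi/\cos\theta$ followed by $t=\rho\tan\theta/a_{k+1}$ are exactly the ones the paper uses, in the same order, and your exponent bookkeeping matches theirs. The geometric interpretation you give (identifying $s$ and $a_{k+1}t$ with the coordinates of the boundary point in the ball and segment factors) is a nice addition not present in the paper, but the analytic content is identical.
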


\begin{proof}
Fix $u_1\in S^{k-1}$ and $\theta \in \left[0,\arctan \frac{a_{k+1}}{\rho_{\bar{G}}(u_1)}\right]$.
We first make the change of variable
\begin{equation*}
s=\frac{\rho_{\bar{G}}(u_1)}{\cos \theta}\tan \phi.
\end{equation*}
By direct computation,
\begin{equation*}
\frac{d \phi}{d s} = \frac{\cos^2\phi \cos\theta}{\rho_{\bar G}(u_1)}=
\frac{\rho_{\bar{G}}(u_1)\cos \theta}{\rho_{\bar{G}}^2(u_1)+s^2\cos^2\theta},
\end{equation*}
and
\begin{equation*}
\begin{aligned}
\cos\phi & = \frac{1}{\sqrt{1+\frac{\cos^2\theta}{\rho_{\bar{G}}^2(u_1)}s^2}}
 = \rho_{\bar{G}}(u_1)\left(\rho_{\bar{G}}^2(u_1)+s^2\cos^2\theta\right)^{-\frac{1}{2}},\\
\sin\phi &= \frac{1}{\sqrt{1+\frac{\cos^2\theta}{\rho_{\bar{G}}^2(u_1)}s^2}}
\frac{s\cos\theta}{\rho_{\bar{G}}(u_1)}
=  s\cos\theta \left(\rho_{\bar{G}}^2(u_1)+s^2\cos^2\theta\right)^{-\frac{1}{2}}.
\end{aligned}
\end{equation*}
Hence,
\begin{equation*}
\begin{aligned}
\rho_{\bar{G}}^q&(u_1)\cos^{k-1-q}\theta\cos^{k-q}\phi \sin^{n-k-2}\phi\, d\phi \\
&= \rho_{\bar{G}}^q(u_1)\cos^{k-1-q}\theta \rho_{\bar{G}}^{k-q}(u_1)
\left(\rho_{\bar{G}}^2(u_1)+s^2\cos^2\theta\right)^{-\frac{k-q}{2}}
 s^{n-k-2}\cos^{n-k-2}\theta \\
&\hskip 128pt
\left(\rho_{\bar{G}}^2(u_1)+s^2\cos^2\theta\right)^{-\frac{n-k-2}{2}}
 \frac{\rho_{\bar{G}}(u_1)\cos \theta}{\rho_{\bar{G}}^2(u_1)+s^2\cos^2\theta}ds\\
&= \rho_{\bar{G}}^{k+1}(u_1)\cos^{n-q-2}\theta
\left(\rho_{\bar{G}}^2(u_1)+s^2\cos^2\theta\right)^{-\frac{n-q}{2}}s^{n-k-2}ds,
\end{aligned}
\end{equation*}
and
\begin{equation*}
\begin{aligned}
\cos&^{k-1}\theta \cos^{k}\phi \sin^{n-k-q-2}\phi\, d\phi\\
&= \cos^{k-1}\theta\rho_{\bar{G}}^k(u_1)
\left(\rho_{\bar{G}}^2(u_1)+s^2\cos^2\theta\right)^{-\frac{k}{2}}
s^{n-k-q-2}\cos^{n-k-q-2}\theta \\
&\hskip 128pt \left(\rho_{\bar{G}}^2(u_1)+s^2\cos^2\theta\right)^{-\frac{n-k-q-2}{2}}
\frac{\rho_{\bar{G}}(u_1)\cos \theta}{\rho_{\bar{G}}^2(u_1)+s^2\cos^2\theta}ds\\
&= \rho_{\bar{G}}^{k+1}(u_1)\cos^{n-q-2}\theta
\left(\rho_{\bar{G}}^2(u_1)+s^2\cos^2\theta\right)^{-\frac{n-q}{2}}s^{n-k-q-2}ds.
\end{aligned}
\end{equation*}
Thus, by \eqref{5.17}, we have
\begin{equation}\label{5.19}
\begin{aligned}
I_1 &= \int_{S^{k-1}}du_1\int_{0}^{\arctan \frac{a_{k+1}}{\rho_{\bar{G}}(u_1)}}d\theta
\int_{0}^{1}\rho_{\bar{G}}^{k+1}(u_1)\cos^{n-q-2}\theta
\left(\rho_{\bar{G}}^2(u_1)+s^2\cos^2\theta\right)^{-\frac{n-q}{2}}s^{n-k-2}ds\\
&= \int_{S^{k-1}}du_1 \int_{0}^{1}ds\int_{0}^{\arctan \frac{a_{k+1}}{\rho_{\bar{G}}(u_1)}}
\rho_{\bar{G}}^{k+1}(u_1)\cos^{n-q-2}\theta\left(\rho_{\bar{G}}^2(u_1)
+s^2\cos^2\theta\right)^{-\frac{n-q}{2}}s^{n-k-2}d\theta,
\end{aligned}
\end{equation}
and
\begin{equation}\label{5.20}
\begin{aligned}
I_2 &= \int_{S^{k-1}}du_1\int_{0}^{\arctan \frac{a_{k+1}}{\rho_{\bar{G}}(u_1)}}d\theta
\int_{1}^{\infty}\rho_{\bar{G}}^{k+1}(u_1)\cos^{n-q-2}\theta
\left(\rho_{\bar{G}}^2(u_1)+s^2\cos^2\theta\right)^{-\frac{n-q}{2}}s^{n-k-q-2}ds\\
&= \int_{S^{k-1}}du_1\int_{1}^{\infty}ds\int_{0}^{\arctan \frac{a_{k+1}}
{\rho_{\bar{G}}(u_1)}}\rho_{\bar{G}}^{k+1}(u_1)\cos^{n-q-2}\theta
\left(\rho_{\bar{G}}^2(u_1)+s^2\cos^2\theta\right)^{-\frac{n-q}{2}}s^{n-k-q-2}d\theta.
\end{aligned}
\end{equation}

Let us now fix $u_1$, $s$ and make the change of variable
\begin{equation*}
t=\frac{\rho_{\bar{G}}(u_1)}{a_{k+1}}\tan \theta.
\end{equation*}
By direct computation,
\begin{equation*}
\frac{d \theta}{d t} = \frac{1}{1+\frac{a_{k+1}^2}
{\rho_{\bar{G}}^2(u_1)}t^2}\frac{a_{k+1}}{\rho_{\bar{G}}(u_1)}
= \frac{\rho_{\bar{G}}(u_1)a_{k+1}}{\rho_{\bar{G}}^2(u_1)+a_{k+1}^2t^2},
\end{equation*}
and
\begin{equation*}
\cos \theta = \frac{1}{\sqrt{1+\frac{a_{k+1}^2}{\rho_{\bar{G}}^2(u_1)}t^2}}
= \rho_{\bar{G}}(u_1)\left(\rho_{\bar{G}}^2(u_1)+a_{k+1}^2t^2\right)^{-\frac{1}{2}}.
\end{equation*}
Hence,
\begin{equation*}
\begin{aligned}
\rho_{\bar{G}}^{k+1}&(u_1)\cos^{n-q-2}\theta\left(\rho_{\bar{G}}^2(u_1)
+s^2\cos^2\theta\right)^{-\frac{n-q}{2}}s^{n-k-2}d\theta\\
= &\rho_{\bar{G}}^{k+1}(u_1)\rho_{\bar{G}}^{n-q-2}(u_1)
\left(\rho_{\bar{G}}^2(u_1)+a_{k+1}^2t^2\right)^{-\frac{n-q-2}{2}}\\
&\hskip 60pt \left(\rho_{\bar{G}}^2(u_1)+s^2\rho_{\bar{G}}^2(u_1)
\left(\rho_{\bar{G}}^2+a_{k+1}^2t^2\right)^{-1}\right)^{-\frac{n-q}{2}}s^{n-k-2}
\frac{\rho_{\bar{G}}(u_1)a_{k+1}}{\rho_{\bar{G}}^2(u_1)+a_{k+1}^2t^2}dt\\
= &a_{k+1}\rho_{\bar{G}}^k(u_1)\left(\rho_{\bar{G}}^2(u_1)
+a_{k+1}^2t^2+s^2\right)^{-\frac{n-q}{2}}s^{n-k-2}dt,
\end{aligned}
\end{equation*}
and similarly,
\begin{equation*}
\begin{aligned}
\rho_{\bar{G}}^{k+1}&(u_1)\cos^{n-q-2}\theta\left(\rho_{\bar{G}}^2(u_1)
+s^2\cos^2\theta\right)^{-\frac{n-q}{2}}s^{n-k-q-2}d\theta\\
=& a_{k+1}\rho_{\bar{G}}^k(u_1)\left(\rho_{\bar{G}}^2(u_1)
+a_{k+1}^2t^2+s^2\right)^{-\frac{n-q}{2}}s^{n-k-q-2}dt.
\end{aligned}
\end{equation*}
Thus, by \eqref{5.19} and \eqref{5.20}, we have
\begin{equation*}
\begin{aligned}
I_1 &= \int_{S^{k-1}}du_1 \int_{0}^{1}ds\int_{0}^{1}a_{k+1}
\rho_{\bar{G}}^k(u_1)\left(\rho_{\bar{G}}^2(u_1)+a_{k+1}^2t^2
+s^2\right)^{-\frac{n-q}{2}}s^{n-k-2}dt\\
	&= \int_{S^{k-1}}du_1 \int_{0}^{1}dt\int_{0}^{1}a_{k+1}
\rho_{\bar{G}}^k(u_1)\left(\rho_{\bar{G}}^2(u_1)+a_{k+1}^2t^2
+s^2\right)^{-\frac{n-q}{2}}s^{n-k-2}ds,
\end{aligned}
\end{equation*}
and
\begin{equation*}
\begin{aligned}
I_2 &= \int_{S^{k-1}}du_1 \int_{1}^{\infty}ds\int_{0}^{1}a_{k+1}
\rho_{\bar{G}}^k(u_1)\left(\rho_{\bar{G}}^2(u_1)+a_{k+1}^2t^2
+s^2\right)^{-\frac{n-q}{2}}s^{n-k-q-2}dt\\
	&= \int_{S^{k-1}}du_1 \int_{0}^{1}dt\int_{1}^{\infty}a_{k+1}
\rho_{\bar{G}}^k(u_1)\left(\rho_{\bar{G}}^2(u_1)+a_{k+1}^2t^2
+s^2\right)^{-\frac{n-q}{2}}s^{n-k-q-2}ds.
\end{aligned}
\end{equation*}
\end{proof}

\begin{lemma}\label{5.21}
Let $n\geq 3$ and $1\leq k\leq n-2$ be integers. Let $0<a_1\leq a_2\leq \cdots \leq a_{k+1}<1$
be $k+1$ real numbers.
Define $I_3$ and $I_4$ as in \eqref{5.17}. Then
\begin{equation*}
\begin{aligned}
I_3 &= a_{k+1}\int_{S^{k-1}}du_1 \int_1^\infty dt\int_{0}^1
\rho_{\bar{G}}^k(u_1)\left(\rho_{\bar{G}}^2(u_1)+a_{k+1}^2t^2
+s^2t^2\right)^{\frac{q-n}{2}}t^{n-k-q-1}s^{n-k-2}ds,\\
I_4 &= a_{k+1}\int_{S^{k-1}}du_1\int_{1}^\infty dt \int_1^\infty
\rho_{\bar{G}}^k(u_1)\left(\rho_{\bar{G}}^2(u_1)+a_{k+1}^2t^2
+s^2t^2\right)^{\frac{q-n}{2}}t^{n-k-q-1}s^{n-k-q-2}ds.
\end{aligned}
\end{equation*}
\end{lemma}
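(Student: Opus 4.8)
The plan is to prove Lemma \ref{5.21} in exact parallel with the proof of Lemma \ref{5.18}: each of the integrals $I_3$ and $I_4$ in \eqref{5.17} is transformed by two successive changes of variables — first in $\phi$, then in $\theta$ — with interchanges of the order of integration in between. Since all integrands appearing are nonnegative, each interchange is justified by Tonelli's theorem, so the whole argument is a matter of bookkeeping.

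First I would treat $I_3$. Fixing $u_1\in S^{k-1}$ and $\theta$, I substitute $s=\frac{a_{k+1}}{\sin\theta}\tan\phi$, which carries $\phi\in\bigl[0,\arctan\frac{\sin\theta}{a_{k+1}}\bigr]$ onto $s\in[0,1]$; as in the proof of Lemma \ref{5.18} one has
\[
\cos\phi=a_{k+1}\bigl(a_{k+1}^2+s^2\sin^2\theta\bigr)^{-1/2},\qquad \sin\phi=s\sin\theta\bigl(a_{k+1}^2+s^2\sin^2\theta\bigr)^{-1/2},
\]
together with $d\phi=a_{k+1}\sin\theta\bigl(a_{k+1}^2+s^2\sin^2\theta\bigr)^{-1}\,ds$, and after cancelling exponents the $\phi$-integrand of $I_3$ collapses to $a_{k+1}^{k+1}\cos^{k-1}\theta\,\sin^{n-k-q-1}\theta\,\bigl(a_{k+1}^2+s^2\sin^2\theta\bigr)^{\frac{q-n}{2}}s^{n-k-2}$. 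After interchanging the $s$- and $\theta$-integrations I then substitute $t=\frac{\rho_{\bar{G}}(u_1)}{a_{k+1}}\tan\theta$, which carries the remaining range $\theta\in\bigl[\arctan\frac{a_{k+1}}{\rho_{\bar{G}}(u_1)},\pi/2\bigr]$ onto $t\in[1,\infty)$; using the analogous formulas for $\cos\theta,\sin\theta,d\theta$ together with the identity
\[
a_{k+1}^2+s^2\sin^2\theta=\frac{a_{k+1}^2\bigl(\rho_{\bar{G}}^2(u_1)+a_{k+1}^2t^2+s^2t^2\bigr)}{\rho_{\bar{G}}^2(u_1)+a_{k+1}^2t^2},
\]
all powers of $\rho_{\bar{G}}^2(u_1)+a_{k+1}^2t^2$ cancel, leaving $a_{k+1}\rho_{\bar{G}}^k(u_1)\bigl(\rho_{\bar{G}}^2(u_1)+a_{k+1}^2t^2+s^2t^2\bigr)^{\frac{q-n}{2}}t^{n-k-q-1}s^{n-k-2}$. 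Reverting to the order $dt\,ds$ gives the asserted formula for $I_3$.

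For $I_4$ the two substitutions are identical, the only differences being that the $\phi$-substitution now carries $\phi\in\bigl(\arctan\frac{\sin\theta}{a_{k+1}},\pi/2\bigr]$ onto $s\in(1,\infty)$, and that after the first substitution the $\phi$-integrand of $I_4$ collapses to $a_{k+1}^{k+1}\cos^{k-1}\theta\,\sin^{n-k-q-1}\theta\,\bigl(a_{k+1}^2+s^2\sin^2\theta\bigr)^{\frac{q-n}{2}}s^{n-k-q-2}$, which has exactly the same $\theta$-dependence as in the $I_3$ computation and differs only in the power of $s$. The $\theta\mapsto t$ substitution therefore produces the same cancellation, and reverting to the order $dt\,ds$ yields the stated expression for $I_4$. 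There is no conceptual obstacle here; the only point requiring care is to track the several cancelling exponents correctly through the two substitutions, precisely as in the proof of Lemma \ref{5.18}.
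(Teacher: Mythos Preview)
Your proposal is correct and follows essentially the same approach as the paper's proof: the same two substitutions $s=\frac{a_{k+1}}{\sin\theta}\tan\phi$ and $t=\frac{\rho_{\bar G}(u_1)}{a_{k+1}}\tan\theta$, applied in the same order with Fubini/Tonelli interchanges in between. Your explicit display of the identity $a_{k+1}^2+s^2\sin^2\theta=a_{k+1}^2\bigl(\rho_{\bar G}^2(u_1)+a_{k+1}^2t^2+s^2t^2\bigr)/\bigl(\rho_{\bar G}^2(u_1)+a_{k+1}^2t^2\bigr)$ is a helpful addition that the paper leaves implicit.
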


\begin{proof}
The proof is similar to that of Lemma \ref{5.18}. We give only the main steps.
We first fix $u_1\in S^{k-1}$,
$\theta \in \left(\arctan\frac{a_{k+1}}{\rho_{\bar{G}}(u_1)},\pi/2\right]$
and make the change of variable
\begin{equation*}
s = \frac{a_{k+1}}{\sin\theta}\tan\phi.
\end{equation*}
By direct computation,
\begin{equation*}
\frac{d \phi}{d s}=\frac{a_{k+1}\sin\theta}{a_{k+1}^2+s^2\sin^2\theta},
\end{equation*}
and
\begin{equation*}
\cos\phi =a_{k+1}\left(a_{k+1}^2+s^2\sin^2\theta\right)^{-\frac{1}{2}}, \quad
\sin\phi =s\sin\theta\left(a_{k+1}^2+s^2\sin^2\theta\right)^{-\frac{1}{2}}.
\end{equation*}
Hence,
\begin{equation*}
\begin{aligned}
a_{k+1}^q&\cos^{k-1}\theta\sin^{-q}\theta\cos^{k-q}\phi\sin^{n-k-2}\phi\, d\phi\\
=&\, a_{k+1}^{k+1}\cos^{k-1}\theta\sin^{n-k-q-1}\theta
\left(a_{k+1}^2+s^2\sin^2\theta\right)^{-\frac{n-q}{2}}s^{n-k-2}ds,
\end{aligned}
\end{equation*}
and
\begin{equation*}
\begin{aligned}
\cos^{k-1}&\theta\cos^{k}\phi\sin^{n-k-q-2}\phi\, d\phi\\
=&\, a_{k+1}^{k+1}\cos^{k-1}\theta\sin^{n-k-q-1}\theta
\left(a_{k+1}^2+s^2\sin^2\theta\right)^{-\frac{n-q}{2}}s^{n-k-q-2}ds.
\end{aligned}
\end{equation*}
Thus, by \eqref{5.17},
\begin{equation}
\label{5.22}
I_3 = \int_{S^{k-1}}du_1\int_{0}^{1}ds\int_{\arctan\frac{a_{k+1}}
{\rho_{\bar{G}}(u_1)}}^{\pi/2}a_{k+1}^{k+1}\cos^{k-1}\theta\sin^{n-k-q-1}\theta
\left(a_{k+1}^2+s^2\sin^2\theta\right)^{-\frac{n-q}{2}}s^{n-k-2}d\theta,
\end{equation}
and
\begin{equation}\label{5.23}
I_4 = \int_{S^{k-1}}du_1\int_{1}^{\infty}ds\int_{\arctan\frac{a_{k+1}}
{\rho_{\bar{G}}(u_1)}}^{\pi/2}a_{k+1}^{k+1}\cos^{k-1}\theta\sin^{n-k-q-1}\theta
\left(a_{k+1}^2+s^2\sin^2\theta\right)^{-\frac{n-q}{2}}s^{n-k-q-2}d\theta.
\end{equation}

Let us now fix $u_1$, $s$ and make the change of variable
\begin{equation*}
t=\frac{\rho_{\bar{G}}(u_1)}{a_{k+1}}\tan\theta.
\end{equation*}
By direct computation,
\begin{equation*}
\frac{d \theta}{d t}
 = \frac{\rho_{\bar{G}}(u_1)a_{k+1}}{\rho_{\bar{G}}^2(u_1)+a_{k+1}^2t^2},
\end{equation*}
and
\begin{equation*}
\cos\theta = \rho_{\bar{G}}(u_1)\left(\rho_{\bar{G}}^2(u_1)+a_{k+1}^2t^2\right)^{-\frac{1}{2}},
\quad \sin\theta =a_{k+1}t\left(\rho_{\bar{G}}^2(u_1)+a_{k+1}^2t^2\right)^{-\frac{1}{2}}.
\end{equation*}
Hence,
\begin{equation*}
\begin{aligned}
a_{k+1}^{k+1}&\cos^{k-1}\theta\sin^{n-k-q-1}\theta
\left(a_{k+1}^2+s^2\sin^2\theta\right)^{-\frac{n-q}{2}}s^{n-k-2}d\theta\\
=&\, a_{k+1}\rho_{\bar{G}}^k(u_1)\left(\rho_{\bar{G}}^2(u_1)
+a_{k+1}^2t^2+s^2t^2\right)^{-\frac{n-q}{2}}t^{n-k-q-1}s^{n-k-2}dt,
\end{aligned}
\end{equation*}
and
\begin{equation*}
\begin{aligned}
a_{k+1}^{k+1}&\cos^{k-1}\theta\sin^{n-k-q-1}\theta
\left(a_{k+1}^2+s^2\sin^2\theta\right)^{-\frac{n-q}{2}}s^{n-k-q-2}d\theta\\
=&\, a_{k+1}\rho_{\bar{G}}^k(u_1)\left(\rho_{\bar{G}}^2(u_1)
+a_{k+1}^2t^2+s^2t^2\right)^{-\frac{n-q}{2}}t^{n-k-q-1}s^{n-k-q-2}dt.
\end{aligned}
\end{equation*}
Thus, by \eqref{5.22} and \eqref{5.23},
\begin{equation*}
I_3 =\int_{S^{k-1}}du_1\int_{1}^{\infty}dt\int_{0}^{1}a_{k+1}
\rho_{\bar{G}}^k(u_1)\left(\rho_{\bar{G}}^2(u_1)+a_{k+1}^2t^2
+s^2t^2\right)^{-\frac{n-q}{2}}t^{n-k-q-1}s^{n-k-2}ds,
\end{equation*}
and
\begin{equation*}
I_4 = \int_{S^{k-1}}du_1\int_{1}^{\infty}dt\int_{1}^{\infty}a_{k+1}
\rho_{\bar{G}}^k(u_1)\left(\rho_{\bar{G}}^2(u_1)+a_{k+1}^2t^2
+s^2t^2\right)^{-\frac{n-q}{2}}t^{n-k-q-1}s^{n-k-q-2}ds.
\end{equation*}
\end{proof}

The next lemma provides upper bounds for the integrals $I_1,I_2,I_3$, and $I_4$.

\begin{lemma}\label{5.24}
Let $n\geq 3$ and $1\leq k\leq n-2$ be integers,
$0<a_1\leq a_2\leq \cdots \leq a_{k+1}<1$ be $k+1$ real numbers,
and $q$ be a real number satisfying $k<q<k+1$.
 Define $I_1, \cdots, I_4$ as in \eqref{5.17}.
Then there exist constants $c_i(k,q,n)$ such that
\begin{equation}
	\label{5.26} I_i\leq c_i(k,q,n)a_1\cdots a_k a_{k+1}^{q-k}, \quad
i=1,2,3,4.
\end{equation}
\end{lemma}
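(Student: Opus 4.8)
The plan is to estimate $I_1,\dots,I_4$ one at a time, starting from the closed forms in Lemmas \ref{5.18} and \ref{5.21}, and using two elementary facts throughout. First, since $0<a_1\le\cdots\le a_{k+1}<1$, equation \eqref{5.10} gives $0<a_1\le\rho_{\bar G}(u_1)\le a_k\le a_{k+1}<1$ for every $u_1\in S^{k-1}$; hence $\rho_{\bar G}^q(u_1)\le a_{k+1}^{q-k}\rho_{\bar G}^k(u_1)$ because $q-k>0$, and $a_{k+1}\le a_{k+1}^{q-k}$ because $0<q-k<1$. Second, the volume formula \eqref{eq volume} applied in $\mathbb R^k$ gives
\[
\int_{S^{k-1}}\rho_{\bar G}^k(u_1)\,du_1=k\,V_k(\bar G)=k\omega_k\,a_1\cdots a_k .
\]
Thus for each $i$ it suffices to bound the inner integral in $s$ and $t$ by $c(n,k,q)$ times a suitable power of $\rho_{\bar G}(u_1)$ and of $a_{k+1}$, and then integrate over $S^{k-1}$.

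The integrals $I_2$ and $I_4$ are immediate. In $I_2$ one has $s\ge1$, so $\rho_{\bar G}^2(u_1)+a_{k+1}^2t^2+s^2\ge s^2$ and, as $q<n$, the integrand is at most $s^{q-n}s^{n-k-q-2}=s^{-k-2}$; hence the inner integral is at most $\int_0^1dt\int_1^\infty s^{-k-2}\,ds=\tfrac1{k+1}$, and $I_2\le c(n,k,q)\,a_{k+1}\int_{S^{k-1}}\rho_{\bar G}^k\,du_1\le c_2(n,k,q)\,a_1\cdots a_k\,a_{k+1}^{q-k}$, using $a_{k+1}\le a_{k+1}^{q-k}$. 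For $I_4$ one similarly uses $s,t\ge1$ and $\rho_{\bar G}^2(u_1)+a_{k+1}^2t^2+s^2t^2\ge s^2t^2$ to bound the integrand by $s^{-k-2}t^{-k-1}$, giving $I_4\le c_4(n,k,q)\,a_1\cdots a_k\,a_{k+1}^{q-k}$. For $I_3$, bound the quadratic form below by $t^2(a_{k+1}^2+s^2)$, so that (as $q<n$) the $t$–integrand is at most $t^{-k-1}(a_{k+1}^2+s^2)^{(q-n)/2}s^{n-k-2}$ and $\int_1^\infty t^{-k-1}\,dt=\tfrac1k$; the substitution $s=a_{k+1}\sigma$ in the remaining integral yields
\[
\int_0^1(a_{k+1}^2+s^2)^{(q-n)/2}s^{n-k-2}\,ds=a_{k+1}^{q-k-1}\int_0^{1/a_{k+1}}(1+\sigma^2)^{(q-n)/2}\sigma^{n-k-2}\,d\sigma\le c(n,k,q)\,a_{k+1}^{q-k-1},
\]
the integral converging at $0$ because $n-k-2\ge-1$ (i.e. $k\le n-2$) and at $\infty$ because $q-k-2<-1$ (i.e. $q<k+1$). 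Hence the inner integral of $I_3$ is at most $c(n,k,q)\,a_{k+1}^{q-k-1}$, so $I_3\le c_3(n,k,q)\,a_1\cdots a_k\,a_{k+1}^{q-k}$.

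The estimate of $I_1$ is the real obstacle, because here the inner integral genuinely depends on $\rho_{\bar G}(u_1)$ and cannot be bounded by a constant. I would first integrate in $s$: writing $A^2=\rho_{\bar G}^2(u_1)+a_{k+1}^2t^2$ (so $A<\sqrt2$) and substituting $s=A\sigma$ turns $\int_0^1(A^2+s^2)^{(q-n)/2}s^{n-k-2}\,ds$ into $A^{q-k-1}\int_0^{1/A}(1+\sigma^2)^{(q-n)/2}\sigma^{n-k-2}\,d\sigma\le c(n,k,q)\,A^{q-k-1}$, using $1/A>1/\sqrt2$ and the convergent integral from the $I_3$ step. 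Next I would integrate $(\rho_{\bar G}^2(u_1)+a_{k+1}^2t^2)^{(q-k-1)/2}$ in $t$: with $r=\rho_{\bar G}(u_1)$, the substitution $t=(r/a_{k+1})\tau$ produces $\tfrac{r^{q-k}}{a_{k+1}}\int_0^{a_{k+1}/r}(1+\tau^2)^{(q-k-1)/2}\,d\tau$, and since $-1<q-k-1<0$ an elementary splitting at $\tau=1$ gives $\int_0^M(1+\tau^2)^{(q-k-1)/2}\,d\tau\le c(q,k)\bigl(1+M^{q-k}\bigr)$ for every $M>0$; with $M=a_{k+1}/r$ this bounds the inner $(s,t)$–integral of $I_1$ by $c(n,k,q)\bigl(\tfrac{r^{q-k}}{a_{k+1}}+a_{k+1}^{q-k-1}\bigr)$. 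Therefore
\[
I_1\le c(n,k,q)\,a_{k+1}\int_{S^{k-1}}\rho_{\bar G}^k(u_1)\Bigl(\tfrac{\rho_{\bar G}^{q-k}(u_1)}{a_{k+1}}+a_{k+1}^{q-k-1}\Bigr)du_1=c(n,k,q)\int_{S^{k-1}}\bigl(\rho_{\bar G}^q(u_1)+a_{k+1}^{q-k}\rho_{\bar G}^k(u_1)\bigr)du_1,
\]
and bounding $\rho_{\bar G}^q(u_1)\le a_{k+1}^{q-k}\rho_{\bar G}^k(u_1)$ and then integrating out $\rho_{\bar G}^k$ gives $I_1\le c_1(n,k,q)\,a_1\cdots a_k\,a_{k+1}^{q-k}$. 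The delicate point, recurring in $I_1$ and $I_3$, is precisely this bookkeeping of how the auxiliary one–dimensional integrals scale in $a_{k+1}$ and in $\rho_{\bar G}(u_1)$: their convergence rests on the hypothesis $k<q<k+1$ together with $1\le k\le n-2$, and the reassembly into the product $a_1\cdots a_k\,a_{k+1}^{q-k}$ rests on the inequalities $\rho_{\bar G}(u_1)\le a_{k+1}$ and $a_{k+1}\le a_{k+1}^{q-k}$.
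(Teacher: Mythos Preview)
Your proof is correct and follows the same overall approach as the paper's: starting from the closed forms in Lemmas~\ref{5.18} and~\ref{5.21}, bounding the quadratic form from below, reducing to convergent one-variable integrals whose finiteness rests on $k<q<k+1$ and $k\le n-2$, and assembling via $\int_{S^{k-1}}\rho_{\bar G}^k=k\omega_k\,a_1\cdots a_k$. The only cosmetic differences are that the paper splits the $s$-integrals at $s=a_{k+1}t$ (for $I_1$) and $s=a_{k+1}$ (for $I_3$) rather than substituting, and for $I_1$ the paper discards the $\rho_{\bar G}^2$ term from the quadratic form at the outset instead of retaining it and invoking $\rho_{\bar G}^q\le a_{k+1}^{q-k}\rho_{\bar G}^k$ at the end.
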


\begin{proof}
By the volume formula \eqref{eq volume} applied in $\mathbb{R}^k$,
the fact the $\bar{G}$ is an ellipsoid in $\mathbb{R}^k$,
and the volume formula for an ellipsoid,
\begin{equation}\label{5.25}
\int_{S^{k-1}}\rho_{\bar{G}}^k(u_1)\,du_1=k\omega_k a_1\cdots a_k.
\end{equation}

By Lemma \ref{5.18}, $q<n$, and the facts that
$n-k-1>0$ and $q-k-1\neq 0$, we have
\begin{equation}
	\begin{aligned}
I_1&\leq a_{k+1}\int_{S^{k-1}}du_1\int_{0}^1 dt \int_{0}^1
\rho_{\bar{G}}^k(u_1)\left(a_{k+1}^2t^2+s^2\right)^\frac{q-n}{2}s^{n-k-2}ds\\
&\leq a_{k+1}\int_{S^{k-1}}du_1\int_{0}^1dt \int_{0}^{a_{k+1}t}
\rho_{\bar{G}}^k(u_1)(a_{k+1}t)^{q-n}s^{n-k-2}ds\\
	 &\hskip 130pt +a_{k+1}\int_{S^{k-1}}du_1\int_{0}^1dt\int_{a_{k+1}t}^1
\rho_{\bar{G}}^k(u_1)s^{q-n}s^{n-k-2}ds\\
&= a_{k+1}\int_{S^{k-1}}du_1\int_{0}^1\rho_{\bar{G}}^k(u_1)
\frac{1}{n-k-1}(a_{k+1}t)^{q-k-1}dt\\
	&\hskip 80pt +a_{k+1}\int_{S^{k-1}}du_1\int_{0}^1 \rho_{\bar{G}}^k(u_1)
\frac{1}{q-k-1}\left(1-(a_{k+1}t)^{q-k-1}\right)dt\\
&= \left(\frac{1}{n-k-1}+\frac{1}{k+1-q}\right)a_{k+1}^{q-k}\int_{S^{k-1}}\,du_1
\int_{0}^1 \rho_{\bar{G}}^k(u_1)t^{q-k-1}dt\\
	&\hskip 210pt +\frac{1}{q-k-1}a_{k+1}\int_{S^{k-1}}\rho_{\bar{G}}^k(u_1)\,du_1.
	\end{aligned}
\end{equation}

Note that $-1<q-k-1<0$. Hence
\begin{equation}
	\begin{aligned}
I_1\leq & \left(\frac{1}{n-k-1}+\frac{1}{k+1-q}\right)
a_{k+1}^{q-k}\int_{S^{k-1}}du_1\int_{0}^1 \rho_{\bar{G}}^k(u_1)t^{q-k-1}dt\\
		= & \left(\frac{1}{n-k-1}+\frac{1}{k+1-q}\right)\frac{1}{q-k}a_{k+1}^{q-k}
\int_{S^{k-1}}\rho_{\bar{G}}^k(u_1)\,du_1\\
		=& \left(\frac{1}{n-k-1}+\frac{1}{k+1-q}\right)
\frac{k\omega_k}{q-k} a_1\cdots a_ka_{k+1}^{q-k},
	\end{aligned}
\end{equation}
where the last equality follows from \eqref{5.25}. This establishes \eqref{5.26} for $I_1$.

Since $q<n$, we have, by Lemma \ref{5.18},
\begin{equation}
	\begin{aligned}
		I_2&\leq a_{k+1}\int_{S^{{k-1}}}du_1\int_{0}^1dt\int_{1}^\infty
\rho_{\bar{G}}^k(u_1)s^{q-n}s^{n-k-q-2}ds\\
		&=\frac{1}{k+1}a_{k+1}\int_{S^{k-1}}\rho_{\bar{G}}^{k}(u_1)\,du_1\\
		&=\frac{k\omega_k}{k+1}a_1\cdots a_k a_{k+1},
	\end{aligned}
\end{equation}
where the last equality follows from \eqref{5.25}. By the fact that $q-k<1$,
together with the fact that $0<a_{k+1}<1$, we have
\begin{equation}
	I_2\leq \frac{k\omega_k}{k+1}a_1\cdots a_k a_{k+1}^{q-k}.
\end{equation}
This establishes \eqref{5.26} for $I_2$.

By Lemma \ref{5.21}, $q<n$, and the fact that $n-k-1>0$, $k>0$, and $q-k-1<0$, we have
\begin{equation}
	\begin{aligned}
		I_3 \leq & a_{k+1}\int_{S^{k-1}}du_1\int_{1}^\infty dt \int_{0}^1
\rho_{\bar{G}}^k(u_1)\left(a_{k+1}^2t^2+s^2t^2\right)^\frac{q-n}{2}t^{n-k-q-1}s^{n-k-2}ds\\
		\leq & a_{k+1}\int_{S^{k-1}}du_1 \int_{1}^\infty dt\int_0^{a_{k+1}}
\rho_{\bar{G}}^k(u_1)(a_{k+1}t)^{q-n}t^{n-k-q-1}s^{n-k-2}ds\\
		 &\hskip 120pt +a_{k+1}\int_{S^{k-1}}du_1\int_{1}^{\infty}dt\int_{a_{k+1}}^1
\rho_{\bar{G}}^k(u_1)(st)^{q-n}t^{n-k-q-1}s^{n-k-2}ds\\
		=& \frac{1}{k(n-k-1)}a_{k+1}^{q-k}\int_{S^{k-1}} \rho_{\bar{G}}^{k}(u_1)\,du_1
		 +\frac{1}{k(q-k-1)}a_{k+1}\left(1-a_{k+1}^{q-k-1}\right)
\int_{S^{k-1}}\rho_{\bar{G}}^k(u_1)\,du_1\\
		=& \frac{1}{k}\left(\frac{1}{n-k-1}+\frac{1}{k+1-q}\right)a_{k+1}^{q-k}
\int_{S^{n-1}}\rho_{\bar{G}}^k(u_1)\,du_1
		+\frac{1}{k(q-k-1)}a_{k+1}\int_{S^{k-1}}\rho_{\bar{G}}^k(u_1)\,du_1.
	\end{aligned}
\end{equation}
Note that $q-k-1<0$. Hence,
\begin{equation}
	\begin{aligned}
		I_3&\leq \frac{1}{k}\left(\frac{1}{n-k-1}+\frac{1}{k+1-q}\right)a_{k+1}^{q-k}
\int_{S^{n-1}}\rho_{\bar{G}}^k(u_1)\,du_1\\
		&= \left(\frac{1}{n-k-1}+\frac{1}{k+1-q}\right)\omega_k a_1\cdots a_k a_{k+1}^{q-k},
	\end{aligned}
\end{equation}
where the last equality follows from \eqref{5.25}. This establishes \eqref{5.26} for $I_3$.

Since $q<n$, we have, by Lemma \ref{5.21},
\begin{equation}
	\begin{aligned}
		I_4 &\leq a_{k+1}\int_{S^{k-1}}du_1\int_{1}^\infty dt \int_{1}^\infty
\rho_{\bar{G}}^k(u_1)(st)^{q-n}t^{n-k-q-1}s^{n-k-q-2}ds\\
		&= \frac{1}{k(k+1)}a_{k+1}\int_{S^{k-1}}\rho_{\bar{G}}^k(u_1)\,du_1\\
		&=\frac{\omega_k}{k+1}a_1\cdots a_k a_{k+1}\\
		&\leq \frac{\omega_k}{k+1}a_1\cdots a_k a_{k+1}^{q-k},
	\end{aligned}
\end{equation}
where the second to the last equality follows from \eqref{5.25} and the last inequality
follows from $q-k<1$ and $a_{k+1}<1$. This establishes \eqref{5.26} for $I_4$.
\end{proof}

Lemmas \ref{5.16} and \ref{5.24} imply the following lemma.

\begin{lemma}\label{5.27} Let $e_1, \cdots, e_n$
be an orthonormal basis in $\mathbb{R}^n$, $n\geq 3$. Suppose $1\leq k\leq n-2$ is
an integer and $0<a_1\leq a_2\leq \cdots \leq a_{k+1}<1$. Let $G$ be the convex body
defined by
\begin{equation*}
	G=\Big\{x\in \mathbb{R}^n: \frac{|x\cdot e_1|^2}{a_1^2}+\cdots
+\frac{|x\cdot e_k|^2}{a_k^2}\leq 1,\ |x\cdot e_{k+1}|\leq a_{k+1},\
|x\cdot e_{k+2}|^2+\cdots +|x\cdot e_n|^2\leq 1\Big\}.
\end{equation*}
If $q$ is a real number satisfying $k<q<k+1$, then
\begin{equation*}
	\wt W_{n-q}(G)\leq c(k,q,n)a_1\cdots a_k a_{k+1}^{q-k}.
\end{equation*}
\end{lemma}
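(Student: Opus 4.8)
The plan is to derive Lemma \ref{5.27} directly from Lemma \ref{5.16} and Lemma \ref{5.24}, which together already contain all the analytic content; what remains is only to match hypotheses and combine the estimates.

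First I would note that the convex body $G$ appearing in the statement is literally the Cartesian product body defined in \eqref{5.8}, and its hypotheses ($n\geq 3$, $1\leq k\leq n-2$, $0<a_1\leq\cdots\leq a_{k+1}<1$) are exactly those of Lemma \ref{5.16}. Hence Lemma \ref{5.16} gives the exact formula
\[
\wt W_{n-q}(G)=\frac 2n (n-k-1)\omega_{n-k-1}\,(I_1+I_2+I_3+I_4),
\]
with $I_1,\dots,I_4$ the explicit integrals in \eqref{5.17}. Next, the extra hypothesis $k<q<k+1$ imposed in Lemma \ref{5.27} is precisely the one required by Lemma \ref{5.24}, so that lemma applies and yields constants $c_1(k,q,n),\dots,c_4(k,q,n)$ with $I_i\leq c_i(k,q,n)\,a_1\cdots a_k\,a_{k+1}^{q-k}$ for $i=1,2,3,4$. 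Substituting these four bounds into the identity above and setting
\[
c(k,q,n)=\frac 2n (n-k-1)\omega_{n-k-1}\bigl(c_1(k,q,n)+c_2(k,q,n)+c_3(k,q,n)+c_4(k,q,n)\bigr)
\]
gives $\wt W_{n-q}(G)\leq c(k,q,n)\,a_1\cdots a_k\,a_{k+1}^{q-k}$, which is the assertion; the dependence of $c$ on $n,k,q$ only (and, in particular, not on the $a_i$) is clear from the construction.

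There is no real obstacle left at this stage: the difficulty has been front-loaded into the decomposition $\wt W_{n-q}(G)=\frac 2n(n-k-1)\omega_{n-k-1}(I_1+I_2+I_3+I_4)$ obtained via the general spherical coordinates \eqref{5.12} (Lemma \ref{5.16}), into the change-of-variable simplifications of the $I_i$ (Lemmas \ref{5.18} and \ref{5.21}), and into the term-by-term bounds of Lemma \ref{5.24}, where the condition $-1<q-k-1<0$ is exactly what makes the relevant $s$- and $t$-integrals converge and produces the factor $a_{k+1}^{q-k}$. The only care needed in assembling the final statement is bookkeeping: confirming that every hypothesis used in the auxiliary lemmas ($n\geq 3$, $1\leq k\leq n-2$, the ordering and size bounds on the $a_i$, and $k<q<k+1$) is available here, and that collapsing the finitely many dimensional constants into a single symbol $c(k,q,n)$ is legitimate.
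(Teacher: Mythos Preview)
Your proposal is correct and is exactly the paper's approach: the paper's proof of Lemma \ref{5.27} is the single sentence ``Lemmas \ref{5.16} and \ref{5.24} imply the following lemma,'' and you have simply spelled out that implication explicitly.
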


\section{Solutions to the maximization problems and the dual Minkowski problem}\label{6.0}

In this section we solve Maximization Problem II and thus, by Lemmas \ref{3.9} and \ref{3.12}, solving the even dual Minkowski problem for $1 < q < n$.
The solution for  $n-1\leq q<n$ relies
on Lemmas \ref{4.9} and \ref{5.2}, while the solution for $1<q<n-1$
uses Lemmas \ref{4.9} and \ref{5.27}.

\begin{lemma}\label{5.3}
Let $\mu$ be a non-zero even finite Borel measure on $S^{n-1}$ and $1<q<n$.
If $\mu$ satisfies the $q$-th subspace mass inequality, then there exists
$K'\in \mathcal{K}_e^n$ such that
\begin{equation*}\label{max}
\Phi_\mu(K')=\sup\{\Phi_\mu(K): K\in \mathcal{K}_e^n\}.
\end{equation*}
\end{lemma}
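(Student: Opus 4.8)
The plan is to use the direct method of the calculus of variations: take a maximizing sequence $K_l \in \mathcal K_e^n$ for $\Phi_\mu$, normalize it appropriately, show that after passing to a subsequence it converges in the Hausdorff metric to a body $K' \in \mathcal K_e^n$, and then verify that $\Phi_\mu$ is upper semicontinuous enough along this sequence that $K'$ attains the supremum. Since $\Phi_\mu$ is homogeneous of degree $0$, I may first rescale each $K_l$ so that $\widetilde W_{n-q}(K_l) = 1$, equivalently $\int_{S^{n-1}} \rho_{K_l}^q = n$; then $\Phi_\mu(K_l) = E_\mu(K_l) = -\frac{1}{|\mu|}\int_{S^{n-1}} \log h_{K_l}\, d\mu$, so the problem reduces to showing $E_\mu(K_l)$ stays bounded above and the $K_l$ do not degenerate. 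A John-position type normalization is also convenient: applying a linear map, I may assume the John ellipsoid of $K_l$ is an origin-centered ellipsoid with semi-axes $a_{1l} \le \cdots \le a_{nl}$ along some orthonormal basis $\{e_{il}\}$, so that $Q_l \subset K_l \subset \sqrt n\, Q_l$ where $Q_l$ is that ellipsoid.

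\medskip
\noindent
The first substantive step is to show that the maximizing sequence cannot have $a_{nl} \to \infty$ or $a_{1l}\to 0$ in a way that sends $\Phi_\mu$ to $-\infty$ or forces divergence. Since $\Phi_\mu$ evaluated at any fixed body (say the unit ball) is finite, the supremum is $> -\infty$; and by feeding the barrier estimates into $\Phi_\mu$ one bounds the supremum from above by a finite constant, so $\Phi_\mu(K_l)$ is eventually within a bounded window. Using $Q_l \subset K_l$ gives $h_{K_l} \ge h_{Q_l}$, hence $E_\mu(K_l) \le E_\mu(Q_l)$, and Lemma \ref{4.9} applies to the ellipsoid sequence $Q_l$ (the hypothesis $a_{nl} > \varepsilon_0$ holds because the normalization $\widetilde W_{n-q}(K_l)=1$ together with $K_l \subset \sqrt n Q_l$ forces a lower bound on $a_{nl}$). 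Thus
\begin{equation*}
\Phi_\mu(K_l) = E_\mu(K_l) \le E_\mu(Q_l) \le -\tfrac1q \log(a_{1l}\cdots a_{\lfloor q\rfloor l}) - \tfrac{q-\lfloor q\rfloor}{q}\log a_{\lfloor q\rfloor+1,l} + t_0 \log a_{1l} + c,
\end{equation*}
(with the analogous bound \eqref{4.11} when $n-1\le q<n$). On the other hand, the normalization $\int \rho_{K_l}^q = n$ combined with $K_l \subset \sqrt n Q_l$ gives a \emph{lower} bound on $\widetilde W_{n-q}(\sqrt n Q_l)$, and hence — using Lemma \ref{5.27} when $1<q<n-1$ (applied, after a harmless rescaling so that all semi-axes are $<1$, with $k=\lfloor q\rfloor$), or Lemma \ref{5.2} when $n-1\le q<n$ — a bound of the form $c\, a_{1l}\cdots a_{\lfloor q\rfloor l}\, a_{\lfloor q\rfloor+1,l}^{\,q-\lfloor q\rfloor} \ge c' > 0$. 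Taking logarithms, this says $\tfrac1q\log(a_{1l}\cdots a_{\lfloor q\rfloor l}) + \tfrac{q-\lfloor q\rfloor}{q}\log a_{\lfloor q\rfloor+1,l} \ge -c''$, which is exactly the quantity with the opposite sign in the entropy bound. Adding the two inequalities collapses the $\log a_{il}$ terms for $i \le \lfloor q\rfloor+1$ and leaves
\begin{equation*}
\Phi_\mu(K_l) \le t_0 \log a_{1l} + c''',
\end{equation*}
so since $\Phi_\mu(K_l) \to \sup > -\infty$ and $t_0 > 0$, we get $\log a_{1l}$ bounded below, i.e. $a_{1l} \ge c_0 > 0$. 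Together with the upper bound on $a_{nl}$ (from $a_{nl} \le \sqrt n \rho_{Q_l}$-type reasoning and the volume normalization, or directly from boundedness of $\widetilde W_{n-q}$), all semi-axes are pinned between two positive constants.

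\medskip
\noindent
Once $c_0 \le a_{1l} \le \cdots \le a_{nl} \le C_0$, the bodies $K_l$ satisfy $c_0 B^n \subset K_l \subset \sqrt n\, C_0 B^n$, so by the Blaschke selection theorem a subsequence converges in the Hausdorff metric to a compact convex set $K'$ with $c_0 B^n \subset K' \subset \sqrt n C_0 B^n$; in particular $K' \in \mathcal K_e^n$ (it is origin-symmetric as a limit of origin-symmetric bodies and has nonempty interior). It remains to check $\Phi_\mu(K') = \sup$. Hausdorff convergence gives $h_{K_l} \to h_{K'}$ uniformly, hence $E_\mu(K_l) \to E_\mu(K')$ by dominated convergence (the $\log h$ terms are uniformly bounded), and $\rho_{K_l} \to \rho_{K'}$ pointwise a.e. with uniform bounds, hence $\widetilde W_{n-q}(K_l) \to \widetilde W_{n-q}(K')$; so $\Phi_\mu(K_l) \to \Phi_\mu(K')$, giving $\Phi_\mu(K') = \sup\{\Phi_\mu(K) : K \in \mathcal K_e^n\}$.

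\medskip
\noindent
\textbf{Main obstacle.} The delicate point is the interplay in the second paragraph: I need the entropy upper bound (Lemma \ref{4.9}) and the dual-quermassintegral lower bound (from Lemmas \ref{5.2}/\ref{5.27}) to match \emph{exactly} in their dependence on $a_{1l}, \dots, a_{\lfloor q\rfloor+1,l}$, so that summing them produces a clean bound $\Phi_\mu(K_l) \le t_0 \log a_{1l} + c$ with the crucial $+t_0\log a_{1l}$ surviving to force $a_{1l}$ away from $0$. This is precisely why the barrier in Section \ref{5.1} is built as a product of an ellipsoid, a segment, and a ball — the segment factor $a_{\lfloor q\rfloor+1,l}$ carries the fractional exponent $q - \lfloor q\rfloor$ that makes the sharp estimate $\widetilde W_{n-q}(G) \le c\, a_1\cdots a_k a_{k+1}^{q-k}$ align with the entropy estimate's fractional term $\frac{q-\lfloor q\rfloor}{q}\log a_{\lfloor q\rfloor+1,l}$. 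Getting the normalization bookkeeping right (John position, the rescaling to make semi-axes $< 1$ so Lemma \ref{5.27} applies, and tracking that $K_l \subset \sqrt n Q_l$ only inflates constants) is the part requiring care; everything after the semi-axes are pinned is routine compactness and continuity.
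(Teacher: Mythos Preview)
Your overall strategy coincides with the paper's: take a maximizing sequence, sandwich by John ellipsoids $Q_l \subset K_l \subset \sqrt{n}\, Q_l$, combine the entropy bound of Lemma \ref{4.9} with the barrier bound from Lemmas \ref{5.2}/\ref{5.27}, and deduce that the smallest semi-axis $a_{1l}$ cannot tend to $0$. Two points, however, deserve attention.

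First, your normalization $\widetilde W_{n-q}(K_l)=1$ is less convenient than the paper's choice $\operatorname{diam}(K_l)=1$. With diameter $1$ one immediately has $a_{nl}\in[\tfrac{1}{2\sqrt n},\tfrac12]$, all semi-axes are automatically $<1$ (so Lemma \ref{5.27} applies as stated), and Blaschke selection is immediate; one then argues by contradiction that the limit body has nonempty interior. Under your normalization the upper bound on $a_{nl}$ is \emph{not} ``direct from boundedness of $\widetilde W_{n-q}$'': for $1<q<n-1$ there exist bodies (long cylinders over a fixed base) with $\widetilde W_{n-q}$ bounded while $a_{nl}\to\infty$. One can still argue that such a sequence is not maximizing (using that $\mu$ is not concentrated on any hyperplane), but this is extra work and not for the reason you indicate.

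Second, and more substantively, your plan to apply Lemma \ref{5.27} with $k=\lfloor q\rfloor$ (or Lemma \ref{5.2} with $k=n-1$) works \emph{only when $q$ is not an integer}: Lemma \ref{5.27} requires the strict inequalities $k<q<k+1$, and Lemma \ref{5.2} requires $k<q$. For integer $q\in\{2,\dots,n-1\}$ the ``exact match'' you describe breaks down (indeed, $\widetilde W_{n-q}$ of the natural barrier acquires a logarithmic factor in that borderline case). The paper handles all $q\in(1,n)$ uniformly by a perturbation: choose a non-integer $q_0>q$ with $\lfloor q_0\rfloor=\lfloor q\rfloor$ and close enough to $q$ that $(n-2)(\tfrac1{q_0}-\tfrac1q)+t_0>0$, use monotonicity of $L^p$ norms to pass from $\widetilde W_{n-q}$ to $\widetilde W_{n-q_0}$, and apply the barrier lemma at $q_0$. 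This yields
\[
\Phi_\mu(K_l)\le \Big[(n-2)\Big(\tfrac1{q_0}-\tfrac1q\Big)+t_0\Big]\log a_{1l}+c,
\]
which still forces $a_{1l}\not\to 0$. So your observation that the entropy and barrier estimates match term-for-term is exactly right for non-integer $q$ --- and there your argument is in fact a bit more direct than the paper's --- but the $q_0$ perturbation is what makes the proof go through for \emph{all} $q\in(1,n)$.
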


\begin{proof}
Let $\{K_l\}$ be a maximizing sequence; i.e., $K_l\in \mathcal{K}_e^n$ and
\begin{equation*}
	\lim_{l\rightarrow \infty}\Phi_\mu(K_l)=\sup\{\Phi_\mu(K): K\in \mathcal{K}_e^n\}.
\end{equation*}
Since $\Phi_\mu$ is scale invariant, we may assume that $K_l$ has diameter $1$.
By Blaschke's selection theorem, there is a subsequence that
converges to an origin-symmetric compact convex set $K_0$.
By the continuity of $\Phi_\mu$ with respect to the Hausdorff metric,
if $K_0$ has nonempty interior, then $K' = K_0$ satisfies \eqref{max}, proving the theorem and solving Maximum Problem II.
To prove that $K_0$ has nonempty interior, we argue by contradiction and assume that $K_0$ is contained in some proper subspace of $\mathbb{R}^n$.

For each $K_l$, we choose an orthonormal basis $e_{1l},\cdots, e_{nl}$
and real numbers $0<a_{1l}\leq a_{2l}\leq \cdots\leq a_{nl}< 1$ such that the ellipsoid
\begin{equation*}
Q_l=\left\{x\in \mathbb{R}^n:\frac{|x\cdot e_{1l}|^2}{a_{1l}^2}+\cdots
+\frac{|x\cdot e_{nl}|^2}{a_{nl}^2}\leq 1\right\}
\end{equation*}
satisfies
\begin{equation}
	\label{6.1} Q_l\subset K_l\subset\sqrt{n} Q_l.
\end{equation}
In particular, we choose $Q_l$ to be the John ellipsoid associated with $K_l$.
Since the diameter of $K_l$ is $1$,
the diameter of $\sqrt{n} Q_l$ is greater than 1, and therefore
$a_{nl}\geq \frac{1}{2\sqrt{n}}$. By taking subsequences, we may assume that
the sequence of orthonormal bases $\{e_{1l},\ldots, e_{nl}\}$
and the sequences $\{a_{1l}\},\ldots, \{a_{nl}\}$
converge. Since $K_0$ is contained in some proper subspace
of $\mathbb{R}^n$, there must exists $1\leq k\leq n-1$ such that
$a_{1l},\cdots, a_{kl}\rightarrow 0$ as $l\rightarrow \infty$ and
$\lim_{l\rightarrow\infty}a_{il}=a_i$ for $k<i\leq n$ and some $a_{k+1},\cdots, a_n>0$.

We first consider the case of $n-1\leq q<n$.
Lemma \ref{4.9} and \eqref{6.1} allow us to conclude that there exist $t_0,\delta_0,l_0>0$
such that for all $l>l_0$,
\begin{equation}\label{6.2}
E_\mu(K_l)\leq E_\mu(Q_l)\leq -\frac{1}{q}\log(a_{1l}\cdots a_{n-1,l})
+t_0\log a_{1l}+c(n,q,t_0,\delta_0)
\end{equation}
Define the ellipsoidal cylinder,
\begin{equation*}
T_l=\left\{x\in\mathbb{R}^n: \frac{|x\cdot e_{1l}|^2}{a_{1l}^2}+\cdots
+\frac{|x\cdot e_{n-1,l}|^2}{a_{n-1,l}^2}\leq 1 \text{ and }|x\cdot e_{nl}|\leq 1\right\}.
\end{equation*}
Since $a_{nl}\leq 1$, we have
\begin{equation}
	\label{6.3} K_l\subset \sqrt{n} Q_l\subset \sqrt{n}T_l.
\end{equation}
Since $t_0>0$, one can
choose $q_0$ so that $q<q_0<n$ and $(n-1)\left(\frac{1}{q_0}-\frac{1}{q}\right)+t_0>0$.
By \eqref{2.7}, the monotonicity of $L_p$ norms with the fact that $q_0>q$, \eqref{6.3},
the homogeneity of a dual quermassintegral, and Lemma \ref{5.2}, we have
\begin{equation}\label{6.4}
\begin{aligned}
\frac{1}{q}\log \widetilde{W}_{n-q}(K_l)
&= \log\left(\frac{1}{n\omega_n}\int_{S^{n-1}}\rho_{K_l}^q(u)\,du\right)^\frac{1}{q}
   +\frac1q \log \omega_n\\
&\leq \log\left(\frac{1}{n\omega_n}\int_{S^{n-1}}\rho_{K_l}^{q_0}(u)\,du\right)^\frac{1}{q_0}
   +\frac1q \log \omega_n\\
& \leq \log\left(\frac{1}{n\omega_n}\int_{S^{n-1}}
\rho_{\sqrt{n}T_l}^{q_0}(u)\,du\right)^\frac{1}{q_0}
   +\frac1q \log \omega_n\\
& = \frac{1}{q_0}\log \widetilde{W}_{n-q_0}(T_l)+c(n,q,q_0)\\
		&\leq \frac{1}{q_0}\log(a_{1,l}\cdots a_{n-1,l})+c(n,q,q_0).
\end{aligned}
\end{equation}

Equations \eqref{6.2}, \eqref{6.4}, $q_0>q$, and $a_{1l}\leq \cdots \leq a_{n-1,l}$
now imply that
\begin{equation*}
	\begin{aligned}
\Phi_\mu(K_l) &=E_\mu(K_l) + \frac1q \log \wt W_{n-q}(K_l) \\
&\leq \left(\frac{1}{q_0}-\frac{1}{q}\right)\log(a_{1l}\cdots a_{n-1,l})
+t_0\log a_{1l}+c(n,q,q_0,t_0,\delta_0)\\
&\leq \left((n-1)\left(\frac{1}{q_0}-\frac{1}{q}\right)
+t_0\right)\log a_{1l}+c(n,q,q_0,t_0,\delta_0)\\
		&\rightarrow -\infty,
	\end{aligned}
\end{equation*}
as $l\rightarrow \infty$. Here, the last step follows because
 $q_0$ was chosen so that $(n-1)\left(\frac{1}{q_0}-\frac{1}{q}\right)+t_0>0$
and that $a_{1l}\rightarrow 0$. This contradicts the assumption that
$\{K_l\}$ is a maximizing sequence.

Next, we consider the case when $1<q<n-1$. If $n = 2$, then $n-1 = 1$ and therefore the proof above applies. We therefore assume that $n > 3$,

By \eqref{6.1} and Lemma \ref{4.9}, there exists $t_0,\delta_0, l_0>0$
such that for each $l>l_0$, we have
\begin{equation}\label{6.5}
E_\mu(K_l)\leq E_\mu(Q_l)\leq -\frac{1}{q}\log(a_{1l}\cdots a_{\lfloor q \rfloor,l})
-\frac{q-\lfloor q \rfloor}{q}\log a_{\lfloor q \rfloor+1,l}+t_0\log a_{1l}+c(n,t_0,\delta_0).
\end{equation}
Since $t_0>0$, there exists $q_0\in (q,n-1)$ sufficiently close to $q$ so that $q_0$ is a non-integer satisfying $\lfloor q_0 \rfloor=\lfloor q \rfloor$
and $(n-2)\left(\frac{1}{q_0}-\frac{1}{q}\right)+t_0>0$.
Let $k_0$ be the integer so that $q_0-1<k_0<q_0$, that is,
\begin{equation}
	\label{6.6} k_0=\lfloor q_0 \rfloor=\lfloor q\rfloor.
\end{equation}
Let $G_l$ be the Cartesian product of an ellipsoid, a line segment, and a ball given by
\begin{equation*}
	\begin{aligned}
G_l=\Big\{x\in\mathbb{R}^n:\frac{|x\cdot e_{1l}|^2}{a_{1l}^2}+\cdots
+\frac{|x\cdot e_{k_0l}|^2}{a_{k_0l}^2}\leq 1,\ &|x\cdot e_{k_0+1,l}|\leq a_{k_0+1,l},\\
		&|x\cdot e_{k_0+2,l}|^2+\cdots+|x\cdot e_{nl}|^2\leq 1\Big\}.
	\end{aligned}
\end{equation*}
Note that since $a_{1l}\leq \cdots \leq a_{nl}< 1$,
we have $Q_l\subset G_l$. By \eqref{6.1},
\begin{equation}
	\label{6.7}
K_l\subset\sqrt{n} Q_l\subset \sqrt{n}G_l.
\end{equation}
Note that $1\leq k_0\leq n-2$. By \eqref{2.7},
the monotonicity of $L_p$ norms with the fact that $q_0>q$,
\eqref{6.7}, the homogeneity of dual quermassintegral, Lemma \ref{5.27}, and \eqref{6.6},
\begin{equation}
	\label{6.8}
	\begin{aligned}
\frac1q \log \wt W_{n-q}(K_l)& = \log\left(\frac{1}{n\omega_n}\int_{S^{n-1}}
\rho_{K_l}^q(u)\,du\right)^\frac{1}{q}+ \frac1q \log\omega_n \\
&\leq \log \left(\frac{1}{n\omega_n}\int_{S^{n-1}}\rho_{K_l}^{q_0}(u)\,du\right)^\frac{1}{q_0}
+\frac1q \log\omega_n\\
&\leq \log \left(\frac{1}{n\omega_n}\int_{S^{n-1}}
\rho_{\sqrt{n}G_l}^{q_0}(u)\,du\right)^\frac{1}{q_0}+\frac1q \log\omega_n\\
& = \frac{1}{q_0}\log \widetilde{W}_{n-q_0}(G_l)+c(n,q,q_0)\\
&\leq \frac{1}{q_0}\log (a_{1l}\cdots a_{k_0l})
+\frac{q_0-k_0}{q_0}\log a_{k_0+1,l}+c(n,k_0,q,q_0)\\
&= \frac{1}{q_0}\log (a_{1l}\cdots a_{\lfloor q\rfloor,l})
+\frac{q_0-\lfloor q\rfloor}{q_0}\log a_{\lfloor q\rfloor+1,l}+c(n,q,q_0).
	\end{aligned}
\end{equation}
By \eqref{6.5}, \eqref{6.8}, the fact that $q<q_0<n-1$, and the fact that
$0<a_{1l}\leq \cdots\leq a_{nl}< 1$, we conclude that when $l>l_0$,
\begin{equation*}
	\begin{aligned}
\Phi_\mu(K_l)&=E_\mu(K_l) + \frac1q \log \wt W_{n-q}(K_l) \\
&\leq \left(\frac{1}{q_0}-\frac{1}{q}\right)\log(a_{1l}\cdots
a_{\lfloor q\rfloor,l})+\lfloor q\rfloor\left(\frac{1}{q}-\frac{1}{q_0}\right)
\log a_{\lfloor q\rfloor+1,l}+t_0\log a_{1l}+c(n,\delta_0,t_0,q,q_0)\\
&\leq \left(\frac{1}{q_0}-\frac{1}{q}\right)\log(a_{1l}\cdots
a_{\lfloor q\rfloor,l})+t_0\log a_{1l}+c(n,\delta_0,t_0,q,q_0)\\
&\leq \lfloor q\rfloor\left(\frac{1}{q_0}-\frac{1}{q}\right)
\log a_{1l}+t_0\log a_{1l}+c(n,\delta_0,t_0,q,q_0)\\
&\le \left((n-2)\Big(\frac1{q_0}-\frac1q\Big)+t_0\right) \log a_{1l} +c(n,\delta_0,t_0,q,q_0)\\
&\rightarrow -\infty,
	\end{aligned}
\end{equation*}
as $l\rightarrow \infty$. Here the last step uses the fact that
 $(n-2)\left(\frac{1}{q_0}-\frac{1}{q}\right)+t_0>0$ and
 that $\lim_{l\rightarrow \infty}a_{1l}=0$. This contradicts
the assumption that $\{K_l\}$ is a maximizing sequence, thereby proving the lemma.
\end{proof}

The above lemma combined with Lemma \ref{3.13} gives a complete solution to
the even dual Minkowski problem for $1<q<n$.
\begin{theorem}
\label{theorem 2} If $0< q<n$ and $\mu$ is a non-zero even finite Borel measure
on $\sn$, then
there exists $K\in \mathcal{K}_e^n$ such that $\mu = \widetilde{C}_q(K,\cdot)$
if and only if $\mu$ satisfies the $q$-th subspace mass inequality \eqref{4.1}.
\end{theorem}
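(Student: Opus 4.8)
The plan is to deduce the theorem from the variational results set up above, reserving genuinely new work for the range $1<q<n$. \textbf{Necessity} needs nothing new: for $1<q<n$, the $q$-th dual curvature measure of any body in $\mathcal K_e^n$ satisfies \eqref{4.1} by B\"or\"oczky--Henk--Pollehn \cite{BH}, while for $0<q\le 1$, inequality \eqref{4.1} reduces to the classical requirement that $\mu$ not be concentrated on any great subsphere (see \cite{HLYZ}). \textbf{Sufficiency} for $0<q\le 1$ is also known, since there \eqref{4.1} coincides with \eqref{smi1.1}, shown sufficient in \cite{HLYZ}. So the content of the theorem is sufficiency for $1<q<n$.

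For that range, given a non-zero even finite Borel measure $\mu$ satisfying \eqref{4.1}, the plan is a short deduction from the machinery of Sections \ref{section maximization problem}--\ref{6.0}: invoke Lemma \ref{5.3} to produce $K'\in\mathcal K_e^n$ attaining $\sup\{\Phi_\mu(K):K\in\mathcal K_e^n\}$ --- that is, a solution of Maximization Problem II --- and then invoke Lemma \ref{3.13}, which upgrades such a maximizer (after a rescaling permitted by the $q$-homogeneity of $\widetilde W_{n-q}$) to a solution of the Euler--Lagrange equation $\mu=\widetilde C_q(cK',\cdot)$ for a suitable $c>0$. Taking $K=cK'\in\mathcal K_e^n$ completes the proof. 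Conversely, if $\mu=\widetilde C_q(K,\cdot)$ with $K\in\mathcal K_e^n$, then \eqref{4.1} holds by the necessity just recalled. Thus at the level of the theorem itself the argument is pure assembly; all the effort is in Lemmas \ref{3.9}, \ref{3.12}, \ref{4.9}, \ref{5.2}, \ref{5.27} and especially \ref{5.3}.

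The main obstacle lives inside Lemma \ref{5.3}, in excluding the degenerate alternative for a maximizing sequence. Normalizing $K_l$ to diameter $1$ and passing to a Blaschke-convergent subsequence with limit $K_0$, the only danger (by Hausdorff-continuity of $\Phi_\mu$) is that $K_0$ lies in a proper subspace, i.e.\ that some semi-axes $a_{1l},\dots,a_{kl}$ of the John ellipsoid $Q_l$ of $K_l$ tend to $0$; one must then force $\Phi_\mu(K_l)\to-\infty$. The entropy $E_\mu(K_l)$ is bounded above via the spherical-partition estimate of Lemma \ref{4.9}; exploiting the \emph{strict} inequality in \eqref{4.1}, this yields a bound of the shape $-\tfrac1q\log(a_{1l}\cdots a_{\lfloor q\rfloor,l})-\tfrac{q-\lfloor q\rfloor}{q}\log a_{\lfloor q\rfloor+1,l}+t_0\log a_{1l}+c$ with $t_0>0$. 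The term $\tfrac1q\log\widetilde W_{n-q}(K_l)$ must be bounded above by an almost exactly cancelling expression carrying a slightly larger exponent $q_0>q$, which is what dictates the choice of a barrier body containing $Q_l$ whose degree-$q_0$ dual quermassintegral behaves like $a_{1l}\cdots a_{\lfloor q\rfloor,l}\,a_{\lfloor q\rfloor+1,l}^{\,q_0-\lfloor q\rfloor}$: a Cartesian product of an ellipsoid, a line segment, and a ball when $1<q<n-1$ (Lemma \ref{5.27}), and an ellipsoidal cylinder when $n-1\le q<n$ (Lemma \ref{5.2}). Summing the two bounds, the $\log a_{\lfloor q\rfloor+1,l}$ contributions cancel and one is left with a negative multiple of $\log a_{1l}\to-\infty$, provided $q_0$ is chosen close enough to $q$ that a quantity such as $(n-2)(\tfrac1{q_0}-\tfrac1q)+t_0$ stays positive. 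Getting these exponents to match sharply --- in particular, seeing that a one-dimensional segment factor is exactly what is needed for non-integer $q$ --- is the crux of the argument, and the point where this paper goes beyond \cite{HLYZ} and \cite{zhao}.
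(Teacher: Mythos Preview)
Your proposal is correct and follows the paper's approach exactly: Theorem~\ref{theorem 2} is indeed pure assembly from Lemma~\ref{5.3} (existence of a maximizer under \eqref{4.1}) and Lemma~\ref{3.13} (a maximizer solves the dual Minkowski problem after rescaling), with necessity for $1<q<n$ supplied by \cite{BH} and the case $0<q\le 1$ by \cite{HLYZ}. One small slip in your sketch of the mechanism inside Lemma~\ref{5.3}: the $\log a_{\lfloor q\rfloor+1,l}$ contributions do not cancel but combine to $\lfloor q\rfloor\bigl(\tfrac1q-\tfrac1{q_0}\bigr)\log a_{\lfloor q\rfloor+1,l}$, which can then be dropped because the coefficient is positive and $a_{\lfloor q\rfloor+1,l}<1$.
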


Note again that the even dual Minkowski problem, when $0<q\leq 1$,
was solved in \cite{HLYZ}.  The necessary condition of Theorem \ref{theorem 2},
when $1<q<n$, was proved in \cite{BH},
and the sufficient condition of Theorem \ref{theorem 2}, when $q\in \{2, \ldots, n-1\}$
is an integer, was
proved in \cite{zhao}.

\end{document}